\newcommand{\E}{\mathbb{E}}
\newtheorem{assumption}{Assumption}
\newtheorem{theorem}{Theorem}
\newtheorem{thm}{Theorem}[section]
\newtheorem{corollary}{Corollary}
   \newtheoremstyle{TheoremNum}
        {\topsep}{\topsep}              
        {\itshape}                      
        {}                              
        {\bfseries}                     
        {.}                             
        { }                             
        {\thmname{#1}\thmnote{ \bfseries #3}}
    \theoremstyle{TheoremNum}
   \newtheoremstyle{TheoremNum}
        {\topsep}{\topsep}              
        {\itshape}                      
        {}                              
        {\bfseries}                     
        {.}                             
        { }                             
        {\thmname{#1}\thmnote{ \bfseries #3}}
    \theoremstyle{TheoremNum}
\definecolor{rose}{rgb}{1.0, 0.33, 0.64}
\title{Out-of-distribution Robust Optimization}
\author{Zhongze Cai$^{\dagger}$, Hansheng Jiang$^{\ddagger}$, Xiaocheng Li$^{\dagger}$}
\date{\small
$^{\dagger}$Imperial College Business School, Imperial College London, (z.cai22, xiaocheng.li)@imperial.ac.uk  \\ 
$^{\ddagger}$Rotman School of Management, University of Toronto, hansheng.jiang@rotman.utoronto.ca
}
\begin{document}
\maketitle

\begin{abstract}
In this paper, we consider the contextual robust optimization problem under an out-of-distribution setting. The contextual robust optimization problem considers a risk-sensitive objective function for an optimization problem with the presence of a context vector (also known as covariates or side information) capturing related information. While the existing works mainly consider the in-distribution setting, and the resultant robustness achieved is in an out-of-sample sense, our paper studies an out-of-distribution setting where there can be a difference between the test environment and the training environment where the data are collected. We propose methods that handle this out-of-distribution setting, and the key relies on a density ratio estimation for the distribution shift. We show that additional structures such as covariate shift and label shift are not only helpful in defending distribution shift but also necessary in avoiding non-trivial solutions compared to other principled methods such as distributionally robust optimization. We also illustrate how the covariates can be useful in this procedure. Numerical experiments generate more intuitions and demonstrate that the proposed methods can help avoid over-conservative solutions.
\end{abstract}

\section{Introduction}

Contextual optimization considers a constrained optimization problem with the presence of covariates (context), and it can be viewed as a prediction problem under an optimization context where the output of the prediction model serves as the objective function for the downstream optimization problem. The goal is to develop a model (trained from past data) that prescribes a decision/solution for the downstream optimization problem using the covariates directly but without observation of the objective function. It has been extensively studied in recent years under various settings \citep{hu2022fast, elmachtoub2022smart, bertsimas2020predictive, ho2022risk, chen2020online, wilder2019melding, sun2023maximum, huang2024learning}, and we refer to \cite{sadana2024survey} for a survey on the topic. In this paper, we consider the problem of contextual robust optimization where the objective function of the optimization problem becomes a risk-sensitive one. Unlike all existing works, we consider a distribution shift/out-of-distribution setting where the test environment is different from the training environment that generates the training dataset. In this sense, the notion of robustness goes beyond the existing scope and it also covers robustness against the distribution shift. To summarize, our contributions are as follows:

First, we formulate the problem of out-of-distribution robust optimization and propose a method that utilizes a density ratio estimate to make inferences about the test environment with data from the training environment. 
 
Second, we derive theoretical guarantees for the proposed method and use an analytical example to illustrate the values of shift structure in avoiding over-conservative solutions.

Third, we conduct numerical experiments to generate more intuitions for the setup and illustrate the effectiveness of the proposed method.


\textbf{Related Literature.} The study of robust optimization has a long history which considers risk-sensitive objectives and approximate solutions using the notion of uncertainty set (see \cite{ghaoui2003worst, chen2007robust,natarajan2008incorporating,huang2023robust,qiao2023topology} among others). A recent line of works considers the contextual formulation for robust optimization \cite{chenreddy2022data, sun2024predictthencalibratenewperspectiverobust, patel2024conformal, chenreddy2024endtoendconditionalrobustoptimization} which can be viewed as the case of no distribution shift within our framework.  A separate line of works concerns the problem of distributionally robust optimization for conditional or contextual stochastic optimization problems, and various techniques have been proposed, including \cite{esteban2022distributionally,kannan2020residuals, bertsimas2022bootstrap, liu2022distributionally}. Treatment of distributional shift in this context is very scarce except for recent work by \citep{wang2024contextual}, which proposed to treat covariate shift by combining two estimators. \cite{duchi2023distributionally} considered distribution shift in the form of the latent mixture under a DRO framework, but their focus is on loss functions rather than optimization. As for the literature on distribution shift, there are several streams: (i) Classification (label shift): \cite{lipton2018detectingcorrectinglabelshift} propose to correct for label shift in any predictors, but their method only applies to the classification setting, i.e., a finite number of labels; (ii) Regression (label shift): \cite{zhang2013domain} handle continuous labels with the kernel mean matching approach; (iii) Covariate shift: Reweighting is a fundamental idea to address covariate shift, dating back to \cite{shimodaira2000improving}. Covariate shift is also considered in the causal inference literature, typically caused by unobserved confounders \cite{jin2023model}.




\section{Problem Setup}
\subsection{Robust Contextual LP}

\label{subsec:contextualLP}
Consider a standard-form linear program (LP)
\begin{align}
    \label{lp:std}
    \text{LP}(c,A,b) \coloneqq \min_x \ &  c^\top x,\\
    \text{s.t.\ } &  Ax=b, \ x\ge 0, \nonumber
\end{align}
where $c\in\mathbb{R}^n,$ $A\in\mathbb{R}^{m\times n},$ and $b\in\mathbb{R}^m$ are the inputs of the LP, and $x\in\mathbb{R}^n$ is the decision variable. A more data-driven setting of LP considers the presence of some covariates (contextual information or side information), where there is a feature vector $z\in\mathbb{R}^d$ that contains some information related to the objective vector $c$. In this way, the LP can be described by the tuple $(c,A,b,z)$.


The statistical setup of \textit{contextual optimization} considers the tuple $(c,A,b,z)$ drawn from some (unknown) distribution $\mathcal{P}$. As the setup of a machine learning problem, there is an available (training) dataset consisting of i.i.d. samples from $\mathcal{P}$,
$$\mathcal{D}=\{(c_i, A_i, b_i, z_i)\}_{i=1}^N.$$
One utilizes the dataset to develop a model so that in the test phase, one needs to recommend a feasible solution $x_{\text{new}}$ to a new LP problem using only the observation of  $(A_{\text{new}},b_{\text{new}},z_{\text{new}})$ but without observing the objective vector $c_{\text{new}}$. Here $(c_{\text{new}}, A_{\text{new}},b_{\text{new}},z_{\text{new}})$ is a new tuple independently sampled from the distribution $\mathcal{P}.$ In terms of the objective, contextual optimization usually adopts a risk-neutral objective and aims to optimize (in the test phase)
\begin{align}
\label{lp:condi_exp}
 \min_x \ &  \E[c|z]^\top x,\\
    \text{s.t.\ } &  Ax=b, \ x\ge 0 \nonumber,
\end{align}
where the conditional distribution $\E[c|z]$ is not known exactly (due to the unknown $\mathcal{P}$) and has to be learned from the training data. The recommended decision variables $x$ in the test phase can be viewed as a function of $(A,b,z)$.


Alternatively, one can consider a risk-sensitive (robust) objective 
\begin{align}
\label{lp:condi_var}
 \min_x \ &  \text{VaR}_{\alpha}(c^\top x|z),\\
    \text{s.t.\ } &  Ax=b, \ x\ge 0 \nonumber,
\end{align}
where $\alpha\in(0,1)$ is a pre-specified constant. Here $\text{VaR}_{\alpha}(U)$ denotes the $\alpha$-quantile/value-at-risk of a random variable $U$. Specifically, $\text{VaR}_{\alpha}(U)\coloneqq F^{-1}_{U}(\alpha)$ with $F^{-1}_{U}(\cdot)$ being the inverse cumulative distribution function of $U$.
For the objective \eqref{lp:condi_exp}, it concerns the conditional expectation of $\E[c|z]$, while \eqref{lp:condi_var} involves the quantile of the conditional distribution $c^\top x|z.$

From the literature of robust optimization \citep{ben2009robust}, we know that the problem \eqref{lp:condi_var} can be equivalently written as the following optimization problem 
\begin{align}
\label{opt:intractable}
 \min_{x,\mathcal{U}} \max_{c\in \mathcal{U}} \ &  c^\top x,\\
    \text{s.t.\ } &  Ax=b, \ x\ge 0, \ \mathbb{P}_{c|z}(c\in \mathcal{U})\geq \alpha, \nonumber
\end{align}
where the decision variables become $x$, $c$, and the uncertainty set $\mathcal{U}$. The last constraint ensures the coverage guarantee and corresponds to the quantile level $\alpha$; the probability is taken with respect to the conditional distribution $c|z.$ Due to the intractability of \eqref{opt:intractable}, people usually consider the following problem as an approximation
\begin{align}
\label{opt:tractable}
\text{LP}(\mathcal{U}) \coloneqq \min_{x} \max_{c\in \mathcal{U}} \ &  c^\top x,\\
    \text{s.t.\ } &  Ax=b, \ x\ge 0, \nonumber
\end{align}
where $\mathcal{U}$, instead of being a decision variable, is a fixed uncertainty set satisfying $\mathbb{P}_{c|z}(c\in \mathcal{U})\geq \alpha$. The uncertainty set ideally covers the high-density region so that the approximation to \eqref{lp:condi_var} is tighter.

\subsection{Distribution Shift}\label{subsec:distshift_intro}

For both the risk-neutral \eqref{lp:condi_exp} and the risk-sensitive  \eqref{lp:condi_var} settings, the existing works on contextual optimization and robust context optimization mainly consider an in-distribution setting where the training data $\mathcal{D}$ is sampled from some unknown distribution $\mathcal{P}$, and the distribution $\mathcal{P}$ remains unchanged from training to test. In this spirit, the so-called robustness of the existing methods is achieved against either (i) the statistical gap between the realized samples $\mathcal{D}$ and the unknown $\mathcal{P}$ or (ii) the intrinsic uncertainty underlying the distribution $\mathcal{P}$ that causes the randomness of $c|z.$ In this paper, we re-examine this classic problem of robust optimization from the perspective of robustness against \textit{distribution shift} or an \textit{out-of-distribution} setting. While such a robustness guarantee can only be achieved in a very conservative manner in the worst case, we demonstrate (i) how the covariates can be useful and (ii) how additional structure on the distribution shift can reduce the conservativeness. 
 
In the last subsection, we define a joint distribution on the tuple $(c,A,b,z)$. Throughout the remainder of this paper, we omit describing the distribution of the constraint $(A,b)$ and focus on the joint distribution of the objective-covariates pair $(c,z)$. This is without loss of generality because the constraint $(A,b)$ is revealed when solving the LP during the test phase. In general, a distribution shift or an out-of-distribution setting refers to the case in which the test data distribution differs from the training data distribution. Specifically, we consider a training data 
$$\mathcal{D}_{\text{Tr}}\coloneqq \{(c_i,z_i)\}_{i=1}^N\sim \mathcal{P}$$
sampled from some unknown distribution $\mathcal{P}.$ Different from the in-distribution case, the sample of the test phase 
$$(c_{\text{new}},z_{\text{new}})\sim\mathcal{Q},$$
where $\mathcal{Q}$ may be different from $\mathcal{P}.$ The standard setup of contextual optimization (for both risk-neutral objective \eqref{lp:condi_exp} and risk-sensitive objective \eqref{lp:condi_var}) considers the case of $\mathcal{P}=\mathcal{Q}$, whereas an out-of-distribution robust optimization allows a setting where $\mathcal{P}\neq \mathcal{Q}.$

In context-free robust optimization, if one adopts the approximation formulation \eqref{opt:tractable}, the task reduces to characterizing the randomness of $c$ from the observed samples $\{c_1,...,c_N\}$. For contextual robust optimization, the task accordingly becomes characterizing the randomness of the conditional distribution $c|z.$ When it comes to the out-of-distribution setting, it uses the samples from the training distribution $\mathcal{P}$ to obtain a characterization of the conditional distribution $c|z$ under the test distribution $\mathcal{Q}.$ Hence, people usually assume an additional dataset is available
$$\mathcal{D}_{\text{Te}} \coloneqq \{z_1',...,z_M'\}\sim \mathcal{Q},$$
where it contains $M$ samples from $\mathcal{Q}$ but only has the covariates $z_{i}'$ but no corresponding $c_{i}'.$ An alternative setting includes also $c_{i}'$ in the dataset $\mathcal{D}_{\text{Te}}$. We consider this weaker setting for the full generality.

It is imaginable and will be illustrated in the later sections that such distribution shift from $\mathcal{P}$ and $\mathcal{Q}$ cannot be resolved in a worst-case sense. Specifically, if $\mathcal{Q}$ is allowed to be arbitrarily different from $\mathcal{P}$, there is no way we can learn anything meaningful from dataset $\mathcal{D}_{\text{Tr}}$. In this sense, we consider two common structures as in the distribution shift literature: \textit{covariate shift} and \textit{label shift}. 

Notation-wise, let $\mathcal{P}_z, \mathcal{P}_c$ denote the marginal distribution of $z$ and $c$, and $\mathcal{P}_{z|c}, \mathcal{P}_{c|z}$ denote the conditional distribution of $z|c$ and $c|z$. In this notation, $\mathcal{P}=\mathcal{P}_c\times \mathcal{P}_{z|c} =\mathcal{P}_z\times \mathcal{P}_{c|z}.$ We define $\mathcal{Q}_z, \mathcal{Q}_c, \mathcal{Q}_{z|c}$ and $\mathcal{Q}_{c|z}$ analogously. Throughout this paper, we assume all the distributions have density functions. Let $p(z)$ and $p(c)$ denote the marginal density of $z$ and $c$ under distribution $\mathcal{P}$, and $p_{c|z}(c;z)$ denote the conditional density of $c|z$. The corresponding densities for $q$ are defined analogously.

\begin{itemize}
    \item Covariate shift: The distribution of the covariates $z$ differs between $\mathcal{P}$ and $\mathcal{Q}$, while the conditional distribution $c|z$ remains the same. In other words, $\mathcal{P}_z\neq \mathcal{Q}_z$ but $\mathcal{P}_{c|z} = \mathcal{Q}_{c|z}$. 
    \item Label shift: The distribution of the objective vector $c$ differs between $\mathcal{P}$ and $\mathcal{Q}$, while the conditional distribution $z|c$ remains the same. In other words, $\mathcal{P}_c\neq \mathcal{Q}_c$ but $\mathcal{P}_{z|c} = \mathcal{Q}_{z|c}$.
\end{itemize}
In the following, we first state our algorithm in general and then specialize it into these two cases.

\section{OOD Robust Optimization}

Following the setup in the previous section, the goal of robust optimization under the test distribution $\mathcal{Q}$ is 
\begin{align*}
 \min_x \ &  \text{VaR}_{\alpha}^{\mathcal{Q}}(c^\top x|z),\\
    \text{s.t.\ } &  Ax=b, \ x\ge 0 \nonumber,
\end{align*}
where the quantile function VaR is with respect to the conditional distribution of $c^\top x|z$ under $\mathcal{Q}$. If one adopts the approximation scheme, it reduces to solving the following problem 
\begin{align}
\label{opt:dist_shift_OPT}
\text{LP}(\mathcal{U}) \coloneqq \min_{x} \max_{c\in \mathcal{U}} \ &  c^\top x,\\
\text{s.t.\ } &  Ax=b, \ x\ge 0, \nonumber
\end{align}
where the uncertainty set $\mathcal{U}$ satisfies $\mathbb{P}^{\mathcal{Q}}_{c|z}(c\in \mathcal{U})\geq \alpha$, i.e., $\mathcal{U}$ gives a coverage guarantee for the conditional distribution $c|z$ under $\mathcal{Q}.$ Compared to the in-distribution problem \eqref{opt:tractable}, the formulation \eqref{opt:dist_shift_OPT} replaces the requirement on the uncertainty set $\mathcal{U}$ from $\mathcal{P}$ to $\mathcal{Q}.$ Then the task reduces to constructing an uncertainty set $\mathcal{U}$ for the distribution $\mathcal{Q}$ using the data $\mathcal{D}_{\text{Tr}}$ and $\mathcal{D}_{\text{Te}}.$ Consequently, suppose we have a perfect knowledge of the distribution $\mathcal{P}$, then it only needs an estimate of the density ratio to convert our knowledge of $\mathcal{P}$ into an estimate of the test distribution $\mathcal{Q}$.

We formally define the density ratio between $\mathcal{P}$ and $\mathcal{Q}$ as
$$w(c,z) \coloneqq \frac{q(c,z)}{p(c,z)}$$
for all $(c,z)$ where $q$ and $p$ are the density functions for $\mathcal{Q}$ and $\mathcal{P}$, respectively. We assume the density ratio is well-defined everywhere. In particular, for the case of covariate shift, the density ratio concerns only the marginal distribution over $z$, i.e., $w(c,z) = \frac{q(z)}{p(z)}$. Generally, we need to estimate the ratio function $w$ from the data $\mathcal{D}_{\text{Tr}}$ and $\mathcal{D}_{\text{Te}}.$ The following algorithm takes an estimate as its input, and later in Section \ref{sec:DR_estimate}, we show how it can be estimated under the two settings of covariate shift and label shift.

\subsection{Algorithm}

Algorithm \ref{alg:BUQ-OOD} describes our main algorithm for OOD robust optimization.  It takes the training data $\mathcal{D}_{\text{Tr}}$ and the target quantile level $\alpha$ as inputs. Also, it requires a prediction model $\hat{f}$ for the conditional expectation $\E[c|z]$ and a density ratio estimate $\hat{w}(c,z)$. The prediction model $\hat{f}$ can be built based on some other available history data or using part of the training data $\mathcal{D}_{\text{Tr}}$. The predicted vector from this $\hat{f}$ determines the center of the uncertainty set $\mathcal{U}.$

The algorithm consists of two parts. The first part gives a coarse prediction of the quantiles of the conditional distribution $c|z.$ It does not have to be accurate or even consistent. In the second part, we use both the expectation prediction model $\hat{f}$ and the quantile prediction model $\hat{h}$ to construct a variable-sized uncertainty set. The uncertainty set is box-shaped and it is controlled by a scalar variable $\eta.$ The parameter \(\eta\) plays the role of scaling the uncertainty bounds predicted by the quantile regression model \( \hat{h}(z) \). Intuitively, a larger value of \(\eta\) yields a more conservative uncertainty set, enhancing robustness at the cost of increased conservatism. Conversely, a smaller \(\eta\) produces tighter uncertainty bounds, potentially improving decision efficiency but risking inadequate coverage.

Step \ref{step:scaleCaliber1} in Algorithm \ref{alg:BUQ-OOD} is crucial for determining the size of the uncertainty set. In this step, it utilizes the estimated density ratio $\hat{w}$ to re-weight the training samples in $\mathcal{D}_2$. The re-weighting adjusts the original \textit{uniform} weight over all the training samples in $\mathcal{D}_2$ and assigns a weight to each sample to reflect its density in the test distribution $\mathcal{Q}$. Intuitively, step \ref{scaleCaliber1} ensures an empirical coverage of the samples over the test distribution pretending the density ratio estimate is precise.

\begin{algorithm}[ht!]
    \caption{OOD-RO with Box Uncertainty Set}
    \label{alg:BUQ-OOD}
    \begin{algorithmic}[1] 
    \State \textbf{Input}: Training data $\mathcal{D}_{\mathrm{Tr}}$, a prediction model $\hat{f}$ (for $\E[c|z]$), a density ratio estimate $\hat{w}(c,z)$, target quantile level $\alpha$ \nonumber
    \State Initialization: Randomly split the training set into two sets such that $\mathcal{D}_{\mathrm{Tr}} = \mathcal{D}_{1} \cup \mathcal{D}_{2}$ and $\mathcal{D}_{1} \cap \mathcal{D}_{2} = \emptyset$
    \State \textcolor{blue}{\%\% \textit{Base quantile prediction}}
    \For{$(c_i,z_i) \in \mathcal{D}_1$} \label{step:quant_pred}
    \State Calculate the residual vector on the $i$-th training sample
    \begin{align}
        r_i \coloneqq c_i - \hat{f}(z_i)
    \end{align}
    \quad \,\,and denote $r_i=(r_{i,1},...,r_{i,n})^\top$
    \EndFor
    \State Learn a quantile regression model $\hat{h}(z): \mathbb{R}^d \rightarrow \mathbb{R}^n$ by minimizing
    $$
    \sum_{(c_i,z_i)\in\mathcal{D}_1} \sum_{k=1}^n \rho_{\alpha}\left(\hat{h}(z_i)_k-|r_{i,k}|\right)
    $$
    where $\rho_{\alpha}(\cdot) \coloneqq \alpha\ (\cdot)^+ + (1-\alpha)\  (\cdot)^-$ denotes the pinball loss
\State \textcolor{blue}{\%\% \textit{Confidence adjustment}}
\For{$(c_i,z_i)\in \mathcal{D}_2$} \label{step:conformal_pred}
\State Let
$$
\begin{aligned}
    &\bar{c}_{i}(\eta) \coloneqq \hat{f}(z_i) +  \eta \hat{h}(z_i)\  \in \mathbb{R}^n,\\
    &\underline{c}_{i}(\eta) \coloneqq \hat{f}(z_i)- \eta\hat{h}(z_i) \ \in \mathbb{R}^n 
\end{aligned}
$$
\EndFor
\State Choose a minimal $\eta>0$ such that
\begin{equation}
\begin{aligned}
    \sum_{(c_i,z_i)\in \mathcal{D}_2} \dfrac{\hat{w}(c_i,z_i)\cdot \mathbbm{1}\{\underline{c}_{i}(\eta)\le c_i\le \bar{c}_{i}(\eta)\}}{\sum_{(z_j, c_j)\in \mathcal{D}_2} \hat{w}(c_j,z_j)}  \ge \alpha
\end{aligned}
\label{scaleCaliber1}
\end{equation}
where $\mathbbm{1}\{\cdot\}$ is the indicator function and  the inequality within the indicator function is required to hold component-wise \label{step:scaleCaliber1}
\State \textbf{Output}: $\hat{h}$, $\eta$. For any $z\in \mathcal{Z}$, produce 
$$\mathcal{U}_{\alpha}(z) = \left[\hat{f}(z) -\eta\hat{h}(z), \hat{f}(z) +\eta\hat{h}(z)\right]
$$
\end{algorithmic}
\end{algorithm}

We make a few remarks on the algorithm. First, the output uncertainty set $\mathcal{U}_{\alpha}(z)$ from the algorithm is contextualized and depends on the covariates $z$, with the benefits of contextual uncertainty set are illustrated in the existing literature \citep{goerigk2020data, chenreddy2022data, sun2024predictthencalibratenewperspectiverobust}. Second, we note that the algorithm utilizes the idea from the recent literature on conformal prediction to adjust the size of the uncertainty set. Such usage of conformal prediction in robust optimization and inverse optimization have appeared in several recent works \citep{sun2024predictthencalibratenewperspectiverobust, patel2024conformal, lin2024conformal, cao2024non}. The main advantage of using this conformal control is to obtain a coverage guarantee for the uncertainty set without imposing any assumptions/structures on the prediction models $\hat{f}$ and $\hat{h}.$ Third, our re-weighting scheme \eqref{scaleCaliber1} is a natural treatment and it mimics the existing literature on uncertainty calibration and conformal prediction \citep{tibshirani2020conformalpredictioncovariateshift,podkopaev2021distribution}. Moreover, we note that the inputs of the algorithm require only the training data $\mathcal{D}_{\text{Tr}}$ but not the test data $\mathcal{D}_{\text{Te}}.$ The test data contributes only to the construction of the density ratio estimate $\hat{w}$, while Algorithm \ref{alg:BUQ-OOD} does not directly involve the test data. Lastly, we note the algorithm provides just a pipeline to solve the out-of-distribution robust optimization; many of its components, say, the density ratio estimate, the prediction model $\hat{f}$, and the learning of the quantile, are unrestricted in choice and can be substituted with other candidate methods.  

\subsection{Algorithm Analysis}

Now we derive a guarantee for Algorithm \ref{alg:BUQ-OOD} under the following assumption.

\begin{assumption}\label{assump:u_l_bound}
We assume that the estimated density ratio is uniformly bounded from below and above. That is, there exists $\underline{w}>0$ and $\bar{w}>0$ such that
    $$
    \underline{w}\leq \hat{w}(c,z)\leq \bar{w}
    $$
    for all $(c,z)$.
\end{assumption}

We note this assumption is rather mild in that it concerns the estimated density ratio $\hat{w}(c,z)$ but not the true density ratio $w(c,z)=q(c,z)/p(c,z)$. Such a condition can always be met by truncating the estimated density ratio. Boundedness of the true density ratio is also a common assumption adopted in the literature \citep{kpotufe2017lipschitz,ma2023optimally}.


\begin{theorem}\label{thm:cvg_alpha} Under Assumption \ref{assump:u_l_bound}, suppose the density ratio estimate is perfect, i.e., $\hat{w}(c,z)=w(c,z)=q(c,z)/p(c,z)$, then the uncertainty set $\mathcal{U}_{\alpha}(z)$ generated by Algorithm \ref{alg:BUQ-OOD} satisfies the following coverage guarantee,
    \begin{equation*}
        \Big| \mathbb{P}\left(c_{\mathrm{new}} \in \mathcal{U}_\alpha(z_{\mathrm{new}})\right) -\alpha\Big|\leq \dfrac{1}{|\mathcal{D}_2|+1}\cdot \dfrac{\bar{w}}{\underline{w}}
    \end{equation*}
    where the probability on the left-hand-side is with respect to $(c_{\text{new}},z_{\text{new}})\sim \mathcal{Q}$ and $\mathcal{D}_2 \sim \mathcal{P}$.
\end{theorem}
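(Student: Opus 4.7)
The plan is to identify Algorithm \ref{alg:BUQ-OOD} with a weighted split-conformal procedure and adapt the classical exchangeability-based coverage proof of \cite{tibshirani2020conformalpredictioncovariateshift} to the case where the density ratio acts on the joint distribution of $(c,z)$ rather than only on the covariates. As a first step, define the scalar nonconformity score
$$V_i \coloneqq \max_{k} \frac{|c_{i,k}-\hat{f}(z_i)_k|}{\hat{h}(z_i)_k}$$
for each $i\in\mathcal{D}_2$, and the analogous $V_{\mathrm{new}}$ at the test point, so that the coverage event becomes $\{c_{\mathrm{new}}\in\mathcal{U}_\alpha(z_{\mathrm{new}})\}=\{V_{\mathrm{new}}\le \eta^*\}$. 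Condition \eqref{scaleCaliber1} then amounts to selecting $\eta^*$ as the smallest $\eta$ for which the weighted empirical CDF of $\{V_i\}_{i\in\mathcal{D}_2}$ at $\eta$ reaches $\alpha$, with weights $W_i\coloneqq w(c_i,z_i)$. The task reduces to controlling $\mathbb{P}(V_{\mathrm{new}}\le\eta^*)$.

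Next I would establish a tilted exchangeability lemma. Treating $\mathcal{D}_2$ together with the test point as $|\mathcal{D}_2|+1$ samples, the joint density factors as $\prod_{i\in\mathcal{D}_2}p(c_i,z_i)\cdot q(c_{\mathrm{new}},z_{\mathrm{new}}) = \prod_i p(c_i,z_i)\cdot w(c_{\mathrm{new}},z_{\mathrm{new}})$. Conditioning on the unordered multiset of these $|\mathcal{D}_2|+1$ pairs, the probability that any particular index is the ``test'' index equals $W_i/\sum_j W_j$. This is the joint-density-ratio analogue of classical exchangeability and crucially does not rely on any covariate-shift or label-shift restriction; the perfectness assumption $\hat w = w$ is exactly what makes this identity exact.

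The final step combines the two ingredients. Augment $\mathcal{D}_2$ with the test point and let $\tilde\eta$ be the minimal $\eta$ such that the weighted empirical CDF over all $|\mathcal{D}_2|+1$ scores at $\eta$ is $\ge \alpha$; following the $+\infty$ padding trick of \cite{tibshirani2020conformalpredictioncovariateshift} I would set $V_{\mathrm{new}}=+\infty$ inside this definition so that $\tilde\eta$ depends only on $\mathcal{D}_2$ and $z_{\mathrm{new}}$. Conditional on the multiset, the tilted exchangeability yields
$$\mathbb{P}\!\left(V_{\mathrm{new}}\le \tilde\eta\mid\text{multiset}\right) \in \Big[\alpha,\ \alpha + \max_{i}\tfrac{W_i}{\sum_j W_j}\Big],$$
where the lower bound is immediate from exchangeability and the upper bound uses the minimality of $\tilde\eta$ together with the fact that, under the density assumption on $\mathcal{P}$ and $\mathcal{Q}$ stated in Section \ref{subsec:distshift_intro}, the jump of the weighted CDF at $\tilde\eta$ is almost surely a single term $W_i/\sum_j W_j$. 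Under Assumption \ref{assump:u_l_bound}, this last quantity is bounded by $\bar w/[\underline w(|\mathcal{D}_2|+1)]$, delivering the claimed rate once we pass from $\tilde\eta$ to $\eta^*$.

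The main obstacle is exactly this last passage: reconciling the $(|\mathcal{D}_2|+1)$-point quantile $\tilde\eta$ used in the analysis with the $|\mathcal{D}_2|$-point quantile $\eta^*$ used by the algorithm. The $+\infty$ convention is designed so that the two weighted CDFs agree after a simple re-normalization of the denominator by $W_{\mathrm{new}}$; a short monotonicity argument then shows the induced change in the coverage probability is of the same order $\bar w/[\underline w(|\mathcal{D}_2|+1)]$. Bookkeeping for ties and for the left- versus right-continuous convention of the weighted quantile is where I expect the proof to be most finicky, but none of these details should affect the $1/(|\mathcal{D}_2|+1)$ rate.
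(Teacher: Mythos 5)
Your proposal is, at its core, the same argument as the paper's: reduce the box to a scalar score (your $V_i$ is exactly the paper's $\Tilde{\eta}(x_i)$), condition on the unordered multiset of the $|\mathcal{D}_2|+1$ points, use the ``tilted exchangeability'' identity that the test index is $i$ with probability $w(a_i)/\sum_j w(a_j)$ (the paper's step $(2)$), and sandwich the conditional coverage between $\alpha$ minus and plus a single normalized weight, bounded by $\bar{w}/[(|\mathcal{D}_2|+1)\underline{w}]$ via Assumption \ref{assump:u_l_bound}. The only substantive difference is bookkeeping: you route through the augmented $(|\mathcal{D}_2|+1)$-point weighted quantile with the $+\infty$ padding of \cite{tibshirani2020conformalpredictioncovariateshift} and then reconcile it with the algorithm's calibration-only threshold, whereas the paper avoids that detour by directly evaluating, for each possible test identity $i$ (with scores ordered), the exact coverage indicator $\sum_{j=1}^{i-1}w(a_j) < \alpha\sum_{j\neq i}w(a_j)$ and sandwiching it between indicators normalized by $\sum_{j=1}^{N+1}w(a_j)$ --- this handles your flagged ``finicky'' $N$-versus-$(N+1)$ passage in one line. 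One inaccuracy to fix in your write-up: under a joint (or label) shift the padded quantile $\tilde\eta$ does \emph{not} depend only on $\mathcal{D}_2$ and $z_{\mathrm{new}}$, since the test weight $W_{\mathrm{new}}=w(c_{\mathrm{new}},z_{\mathrm{new}})$ appears in the normalizing denominator; this does not break the proof (the whole analysis is conditional on the multiset, where $\tilde\eta$ is an analysis device rather than a computable set), but the covariate-shift padding trick cannot be imported verbatim, and the paper's direct treatment is precisely how it sidesteps this.
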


Theorem \ref{thm:cvg_alpha} considers the case when the density ratio estimate is precise. Under this condition, the performance guarantee holds and the right-hand side scales at a favorable rate as the number of samples in $\mathcal{D}_2$ of Algorithm \ref{alg:BUQ-OOD} increases. We emphasize that the algorithm utilizes samples from the training distribution $\mathcal{P}$ to construct the uncertainty set but the performance guarantee is with respect to the test distribution $\mathcal{Q}.$ This tells that the out-of-distribution robustness is well achievable under a perfect knowledge of density ratio. 

When the density ratio estimate is not perfect, one can also obtain a slightly weaker guarantee. To proceed, let $\hat{\mathcal{Q}}$ denote the estimated test distribution defined by the following density function 
\begin{equation}\label{eq:approx_density}
    \hat{q}(c,z) = \dfrac{\hat{w}(c,z)\cdot p(c,z)}{\int_{(c,z)}\hat{w}(c,z)\cdot p(c,z) \mathrm{d}z\mathrm{d}c}.
\end{equation}
With this definition, the following corollary extends Theorem \ref{thm:cvg_alpha} to the case when the density ratio estimate is not fully accurate. 


\begin{corollary}
    Let $\hat{\mathcal{Q}}$ denote a distribution with a density function given by \eqref{eq:approx_density}, then  under Assumption \ref{assump:u_l_bound}, the uncertainty set $\mathcal{U}_\alpha(z)$ generated by Algorithm \ref{alg:BUQ-OOD} satisfies the following guarantee,
    \begin{equation*}
        \Big|\mathbb{P}\left(c_{\mathrm{new}} \in \mathcal{U}_\alpha(z_{\mathrm{new}})\right) -\alpha\Big| \leq \dfrac{1}{|\mathcal{D}_2|+1}\cdot \dfrac{\bar{w}}{\underline{w}} + D_{\mathrm{TV}}(\mathcal{Q}, \hat{\mathcal{Q}}),
    \end{equation*}
     where the probability on the left-hand-side is with respect to $(c_{\mathrm{new}},z_{\mathrm{new}})\sim \mathcal{Q}$ and $\mathcal{D}_2 \sim \mathcal{P}$. Here $ D_{\mathrm{TV}}(\cdot, \cdot)$ denotes the total variation distance between two distributions.
     \label{coro:guarantee}
\end{corollary}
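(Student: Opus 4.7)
The plan is to treat $\hat{\mathcal{Q}}$ as a proxy test distribution, apply Theorem~\ref{thm:cvg_alpha} with respect to this proxy, and then bridge the gap between $\mathcal{Q}$ and $\hat{\mathcal{Q}}$ using the total variation distance. The corollary then follows by the triangle inequality.

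First I would observe that the normalized density ratio $\hat{w}(c,z)/Z$, where $Z \coloneqq \int \hat{w}(c,z)\,p(c,z)\,\mathrm{d}c\,\mathrm{d}z$, is exactly the true density ratio between $\hat{\mathcal{Q}}$ (as defined in~\eqref{eq:approx_density}) and $\mathcal{P}$. Crucially, the re-weighting scheme in step~\ref{step:scaleCaliber1} of Algorithm~\ref{alg:BUQ-OOD} only uses \emph{normalized} weights, so the constant $Z$ cancels in both numerator and denominator of~\eqref{scaleCaliber1}. Consequently, running Algorithm~\ref{alg:BUQ-OOD} with the estimate $\hat{w}$ produces exactly the same uncertainty set as running it with the exact density ratio $\hat{w}/Z$ between $\hat{\mathcal{Q}}$ and $\mathcal{P}$. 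Moreover, since the ratio $\bar{w}/\underline{w}$ is scale-invariant, the uniform bounds in Assumption~\ref{assump:u_l_bound} still hold for the normalized ratio with the same constant.

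Next I would apply Theorem~\ref{thm:cvg_alpha} with $\hat{\mathcal{Q}}$ playing the role of the test distribution and $\hat{w}/Z$ playing the role of the (now exact) density ratio. Since $\mathcal{D}_2$ is still drawn from $\mathcal{P}$, the hypotheses of the theorem are satisfied, and we obtain
\begin{equation*}
    \Big|\mathbb{P}_{(c_{\mathrm{new}}, z_{\mathrm{new}})\sim \hat{\mathcal{Q}}}\!\left(c_{\mathrm{new}}\in \mathcal{U}_\alpha(z_{\mathrm{new}})\right) - \alpha \Big| \;\le\; \dfrac{1}{|\mathcal{D}_2|+1}\cdot\dfrac{\bar{w}}{\underline{w}}.
\end{equation*}
To relate this to the true test distribution $\mathcal{Q}$, I would use the standard characterization of total variation distance, namely $|\mathbb{P}_{\mathcal{Q}}(E) - \mathbb{P}_{\hat{\mathcal{Q}}}(E)| \le D_{\mathrm{TV}}(\mathcal{Q}, \hat{\mathcal{Q}})$ for any measurable event $E$. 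Applying this to the (random) event $E = \{c_{\mathrm{new}} \in \mathcal{U}_\alpha(z_{\mathrm{new}})\}$, conditional on the data $\mathcal{D}_2$ (which determines $\mathcal{U}_\alpha$), and then integrating over $\mathcal{D}_2\sim\mathcal{P}$, yields
\begin{equation*}
    \Big|\mathbb{P}_{\mathcal{Q}}\!\left(c_{\mathrm{new}}\in \mathcal{U}_\alpha(z_{\mathrm{new}})\right) - \mathbb{P}_{\hat{\mathcal{Q}}}\!\left(c_{\mathrm{new}}\in \mathcal{U}_\alpha(z_{\mathrm{new}})\right)\Big| \;\le\; D_{\mathrm{TV}}(\mathcal{Q}, \hat{\mathcal{Q}}).
\end{equation*}

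Finally I would combine these two bounds via the triangle inequality to obtain the claimed guarantee. The main obstacle I anticipate is the subtle but essential observation about normalization invariance of step~\ref{step:scaleCaliber1}: without it, one cannot directly invoke Theorem~\ref{thm:cvg_alpha} because the unnormalized $\hat{w}$ is not literally the density ratio between $\hat{\mathcal{Q}}$ and $\mathcal{P}$. Everything else reduces to definition-chasing and a routine use of total variation distance, so no additional probabilistic machinery should be required.
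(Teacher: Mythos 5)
Your proposal is correct and follows essentially the same route as the paper: apply Theorem~\ref{thm:cvg_alpha} with $\hat{\mathcal{Q}}$ in place of the test distribution, bound $\bigl|\mathbb{P}_{\hat{\mathcal{Q}}}(c_{\mathrm{new}}\in\mathcal{U}_\alpha(z_{\mathrm{new}}))-\mathbb{P}_{\mathcal{Q}}(c_{\mathrm{new}}\in\mathcal{U}_\alpha(z_{\mathrm{new}}))\bigr|$ by $D_{\mathrm{TV}}(\mathcal{Q},\hat{\mathcal{Q}})$, and combine by the triangle inequality. Your explicit observation that the normalization constant $Z$ cancels in the re-weighting step~\eqref{scaleCaliber1} (and that $\bar{w}/\underline{w}$ is scale-invariant) is a worthwhile justification of the paper's terser ``directly apply Theorem~\ref{thm:cvg_alpha}'' step, but it does not change the argument.
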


Corollary \ref{coro:guarantee} provides a natural extension that includes an extra term in the bound of Theorem \ref{thm:cvg_alpha}. When the density ratio estimate is perfect, $\hat{\mathcal{Q}}=\mathcal{Q},$ and this extra term disappears. At the other end of the spectrum, if one adopts a trivial density estimate where $\hat{w}(c,z)\equiv 1,$ this term becomes $D_{\mathrm{TV}}(\mathcal{Q}, \mathcal{P})$ which is the distance between the training distribution and the test distribution. In this light, the bound shows the benefits of having a good density ratio estimate; and any reasonable density estimate will always bring a better performance guarantee than the naive treatment of $\hat{w}(c,z)\equiv 1$ which ignores the distribution shift between $\mathcal{P}$ and $\mathcal{Q}.$ The two terms on the right-hand-side correspond to two ``orthogonal'' sources of errors: the first one captures the uncertainty calibration error when there is no distribution shift, and the second one captures the error of density ratio estimation. 

We note that for the performance guarantees in both Theorem \ref{thm:cvg_alpha} and Corollary \ref{coro:guarantee}, they are in a marginal sense, not a conditional sense. In other words, they hold ``on average'' for all possible covariates $z$. To obtain a conditional coverage guarantee that holds for each possible $z$ requires additional assumptions and treatments even for the in-distribution setting (see \citep{kannan2022data, sun2024predictthencalibratenewperspectiverobust} among others). It will be accordingly more challenging for an out-of-distribution setting, which we leave for future investigation. Numerically, as the construction of the quantile prediction (in part 1 of Algorithm \ref{alg:BUQ-OOD}) is in a conditional sense, we usually observe a conditional coverage empirically.

\subsection{Density Ratio Estimation for Covariate Shift and Label Shift}
\label{sec:DR_estimate}

Algorithm \ref{alg:BUQ-OOD} takes a density ratio estimate as input. Here we describe how such estimate can be obtained under covariate shift and label shift.

\textbf{Covariate Shift}. We briefly describe the probabilistic classification approach proposed by \cite{bickel2009discriminative}. Intuitively, consider a random sample $z_i$ drawn from a mixture distribution of $\mathcal{P}$ and $\mathcal{Q}$. With equal probability, $z_i$ is either drawn from $\mathcal{P}$ or from $\mathcal{Q}$. If $z_i$ is drawn from $\mathcal{P}$, it is assigned a label $l_i=0$, and if it is drawn from $\mathcal{Q}$, it is assigned a label $l_i=1$. Note that the following equation holds
$$
\dfrac{\mathbb{P}(l_i=1|z_i)}{\mathbb{P}(l_i=0|z_i)} = \dfrac{q(z_i)}{p(z_i)}.
$$
This indicates that if we train a probability classifier that predicts the label $l_i$ based on $z_i$, then the oracle classifier gives $\mathbb{P}(l_i=1|z_i) = q(z_i)/ (q(z_i) + p(z_i))$. 

We can obtain a density ratio estimate based on this observation. First, we pool the covariates $z_i$ from both $\mathcal{D}_{\text{Te}}$ and $\mathcal{D}_{\text{Te}}$. We assign a label $l_i=0$ if $z_i$ is from $\mathcal{D}_{\text{Tr}}$ and $l_i=1$ if $z_i$ is from $\mathcal{D}_{\text{Te}}$. Then, we learn a probability classifier $\hat{p}$ (can be an ML model) so that $\hat{p}(z_i)\approx \mathbb{P}(l_i=1|z_i)$. The density ratio can then be given by
$
\hat{w}(c_i,z_i) = \dfrac{\hat{p}(z_i)}{1-\hat{p}(z_i)}.
$

\textbf{Label Shift}. Most of the existing literature on density ratio estimation under the label shift setting focuses on the classification task where the domain $\mathcal{C}$ for the target variable $c$ is a discrete space. However, the cost vectors $c_i$'s in contextual optimization problems are continuous vectors. We can adopt the kernel mean matching approach introduced in \cite{zhang2013domain}. A brief introduction to the reproductive kernel Hilbert space (RKHS) and kernel mean match (KMM) is deferred to Appendix \ref{appdix:RKHS}. Let $(\mathcal{F}, \kappa, \mathcal{Z})$ and $(\mathcal{G}, \eta, \mathcal{C})$ be RKHS's, with $\phi[z]$ and $\psi[c]$ denoting the respective  feature maps. The kernel mean embedding operators $\mu_{\mathcal{F}}[\cdot], \mu_{\mathcal{G}}[\cdot]$ are defined as
$$
\mu_{\mathcal{F}}[\mathcal{P}_z]\coloneqq \mathbb{E}_{Z\sim \mathcal{P}_z}[\phi[Z]], \ \ 
\mu_{\mathcal{G}}[\mathcal{P}_c]\coloneqq \mathbb{E}_{C\sim \mathcal{P}_c}[\psi[C]].
$$
Define the conditional embedding operator as $\mathcal{U}_{Z|C}\coloneqq \mathcal{A}_{Z,C} \mathcal{A}_{C,C}^{-1}$, where $\mathcal{A}_{Z,C}$ and $\mathcal{A}_{C,C}$ denote the cross-covariance operators \citep{fukumizu2004dimensionality}. Based on the property that 
$$
\mu_{\mathcal{F}}[\mathcal{P}_z] = \mathcal{U}_{Z|C}[\mu_{\mathcal{G}}[\mathcal{P}_c]],
$$
the following equation holds under the label shift assumption:
$
\mu_{\mathcal{F}}[\mathcal{Q}_z] = \mathcal{U}_{Z|C}[\mu_{\mathcal{G}}[\mathcal{Q}_c]].
$
The main idea then is to estimate the density ratio with $\hat{w}(c,z)$ such that the estimated test distribution $\hat{\mathcal{Q}}_c$ satisfies $\mu_{\mathcal{F}}[\mathcal{Q}_z] \approx \mathcal{U}_{Z|C}[\mu_{\mathcal{G}}[\hat{\mathcal{Q}}_c]]$. Following the empirical estimators given in Appendix \ref{appdix:empirical_estimate}, the $L_2$ loss to minimize can be written as
$$
\begin{aligned}
    \left\| \hat{\mathcal{U}}_{Z|C}\left(\dfrac{1}{N}\sum_{i=1}^N w(c_i,z_i)\cdot \psi[c_i]\right)  - \dfrac{1}{M} \sum_{j=1}^M \phi[z_j'] \right\|^2
\end{aligned}.
$$
The estimate $\hat{w}(c, z)$ can then be chosen from a function family that minimizes the above $L_2$ loss.

\textbf{Algorithm Complexity.} In Appendix~\ref{appendix:complexity}, we provide a detailed analysis of the computational complexity of Algorithm~\ref{alg:BUQ-OOD}. As noted there, the density ratio estimation step is the primary factor limiting the algorithm’s scalability, particularly under the label shift setting, where the complexity is at least $\mathcal{O}(N^2)$ due to matrix inversion. Dimensionality reduction techniques can effectively mitigate this issue in practice.


\section{Values of Shift Structure}


In this section, we use an analytical example to show that the structure of covariates shift and label shift is helpful in avoiding over-conservative solutions. Consider the following linear program. 
\begin{align}
    \min_x \ & \ \text{VaR}_{\alpha}^{\mathcal{Q}}(c\cdot x|z)  \ \ \text{s.t.}\ -1\le x\le 1, \label{LP:toy}
\end{align}
where $\alpha\in (0.5,1)$. For the training distribution $\mathcal{P}$, $c = z + \epsilon$ with $z$ and $\epsilon$ independently sampled from $\mathcal{N}(0, \sigma_1^2)$ and $\mathcal{N}(0, \sigma_2^2)$. Under this example, the ideal solution would be $x^* = -\text{sign}(z)$ if the coverage guarantee can be met. An over-conservative solution will give $x^*=0$. In the following, we analyze the problem under covariate shift and label shift.  

\paragraph{Covariate Shift.} We introduce a scalar $s>0$ to represent the extent of the distribution shift. Under the covariate shift setting, the test distribution $\mathcal{Q}$ of $z$ follows $\mathcal{N}(s, \sigma_1^2)$, while the distribution of $\epsilon$ remains unchanged. Hence the conditional distribution of $c|z$ remains to be $\mathcal{N}(z, \sigma_2^2)$. The joint distribution of $(c,z)$ is shifted from $
\mathcal{P} = \mathcal{N}\left(
\begin{pmatrix}
0\\
0
\end{pmatrix}
, \begin{pmatrix}
    \sigma_1^2+\sigma_2^2 & \sigma_1^2\\
    \sigma_1^2 & \sigma_1^2
\end{pmatrix}
\right)$ to $\mathcal{Q} = \mathcal{N}\left(
 \begin{pmatrix}
s\\
s
\end{pmatrix}
, \begin{pmatrix}
    \sigma_1^2+ \sigma_2^2 & \sigma_1^2\\
    \sigma_1^2 & \sigma_1^2 
\end{pmatrix}
\right)$. We consider an idealized setting where the density ratio estimation is perfect and the distribution $\mathcal{P}$ is known. Under this case, the solution of Algorithm \ref{alg:BUQ-OOD} to \eqref{LP:toy} takes the following form
\begin{equation}
    x^* = \left\{
\begin{aligned}
    &1, & \text{VaR}^{\mathcal{Q}}_\alpha(c|z) \leq 0,\\
    &-1, & \text{VaR}^{\mathcal{Q}}_{1-\alpha}(c|z) \geq 0,\\
    &0, & \text{otherwise},
\end{aligned}
\right.\label{eq:our_oracle_solution}
\end{equation}
where $\text{VaR}^{\mathcal{Q}}_\alpha(c|z)$ is with respect to the test distribution $\mathcal{Q}$. The probability of yielding a conservative solution can be calculated by
\begin{equation}\label{eq:toy_solution}
\begin{aligned}
    & \mathbb{P}(x^* =0) = \mathbb{P}_{z\sim \mathcal{Q}_z}(\text{VaR}_{1-\alpha}(c|z)\leq 0\leq \text{VaR}_\alpha(c|z)) \\
    &= \Phi\left(\frac{\sigma_2}{\sigma_1}\Phi^{-1}(\alpha)-\frac{s}{\sigma_1}\right) - \Phi\left(-\frac{\sigma_2}{\sigma_1}\Phi^{-1}(\alpha)-\frac{s}{\sigma_1}\right)
\end{aligned}
\end{equation}
where $\Phi(\cdot)$ is the cumulative distribution function of a standard normal distribution.

Alternatively, one can take a worst-case approach by considering the following set of distributions which we call as \textit{worst-case ball}: 
$$
\mathcal{B}(R)\coloneqq \left\{\mathcal{N}\left(
r
, \begin{pmatrix}
    \sigma_1^2  + \sigma_2^2 & \sigma_1^2\\
    \sigma_1^2 & \sigma_1^2
\end{pmatrix}
\right)\ : \ \|r\| \leq R\right\}
$$
where the set is centered at the training distribution $\mathcal{P}.$ Such a distribution set is commonly considered in distributionally robust optimization and we can accordingly formulate the robust optimization problem as 
\begin{align}
    \min_x \max_{\mathcal{Q}\in \mathcal{B}(R)} \ & \ \text{VaR}^{\mathcal{Q}}_{\alpha}(c\cdot x|z)  \ \ \text{s.t.}\ -1\le x\le 1. \label{LP:toy_DRO}
\end{align}
To make the set $\mathcal{B}(R)$ cover the test distribution $\mathcal{Q}$, we need to have $R=\sqrt{2}s$. Accordingly, we can solve \eqref{LP:toy_DRO} and obtain the following:
\begin{equation}\label{eq:toy_solution_DRO}
    x^* = \left\{
\begin{aligned}
    &1, & \overline{\text{VaR}}_\alpha(c|z) \leq 0,\\
    &-1, & \underline{\text{VaR}}_{1-\alpha}(c|z) \geq 0,\\
    &0, & \text{otherwise},
\end{aligned}
\right.
\end{equation}
where for any $\alpha\in (0,1)$ we define 
$$
\overline{\text{VaR}}_\alpha(c|z)\coloneqq \sup\{\text{VaR}_\alpha(c|z)\ : \ (c,z)\sim \mathcal{Q}\in \mathcal{B}(R)\},
$$ 
$$\underline{\text{VaR}}_{\alpha}(c|z)\coloneqq \inf\{\text{VaR}_{\alpha}(c|z)\ : \ (c,z)\sim \mathcal{Q}\in \mathcal{B}(R)\},
$$
and the probability of an overly conservative solution under this worst-case ball framework \eqref{LP:toy_DRO} equals
\begin{equation*}
\begin{aligned}
    &\mathbb{P}(x^* =0) = \mathbb{P}_{z\sim \mathcal{Q}_z}(\underline{\text{VaR}}_{1-\alpha}(c|z)\leq 0\leq \overline{\text{VaR}}_\alpha(c|z)) \\
    &=\Phi\left(\frac{\sigma_2}{\sigma_1}\Phi^{-1}(\alpha)+\frac{R}{\sigma_1}-\frac{s}{\sigma_1}\right) \\
    &\phantom{=} - \Phi\left(-\frac{\sigma_2}{\sigma_1}\Phi^{-1}(\alpha)-\frac{R}{\sigma_1}-\frac{s}{\sigma_1}\right).
\end{aligned}
\end{equation*}

\begin{figure}[ht!]
    \centering
    \begin{minipage}{0.45\columnwidth}
        \centering
        \includegraphics[width=\textwidth]{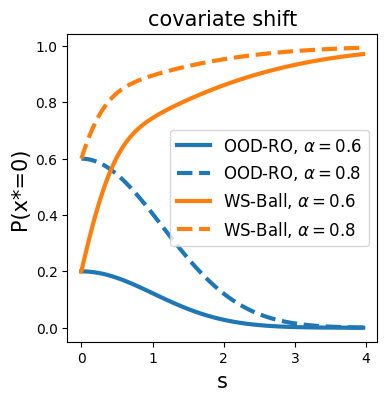} 
    \end{minipage}\hfill
    \begin{minipage}{0.45\columnwidth}
        \centering
        \includegraphics[width=\textwidth]{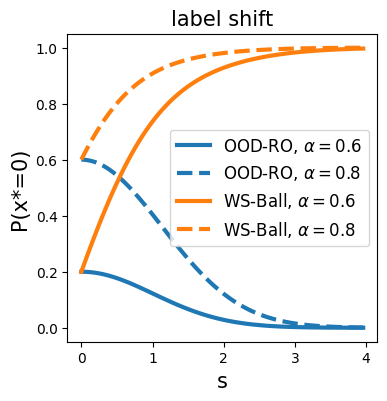} 
    \end{minipage}
    \caption{The curves show how $\mathbb{P}(x^*=0)$ changes with the distribution shift extent $s$. The label ``OOD-RO'' represents our approach and ``WS-Ball'' represents the worst-case approach. $\alpha$ denotes the risk level. As $s$ increases, our ``OOD-RO'' becomes less conservative, with $\mathbb{P}(x^*=0)$ decreasing to $0$, whereas the ``WS-Ball'' becomes increasingly conservative.}
\label{fig:toy_vs_OOD}
\end{figure}

We can do the same calculation for the case of label shift, where the details are deferred to Appendix \ref{sec:illustrative_example}. Figure \ref{fig:toy_vs_OOD} summarizes the two cases to compare our approach (which accounts for the shift structure) against the worst-case ball approach under two target quantile levels $\alpha=0.6$ and $0.8$. For both cases, as $s$ increases, the mean of the covariate $z$ and the objective $c$ becomes more and more positive. If we utilize this structure and also the information from the covariate, we should be able to take a more aggressive decision by setting $x*=-1$; this corresponds to the small probability of $\mathbb{P}(x^*=0)$ for our approach. However, if we do not take advantage of this shift structure, the worst-case ball has to be very large to cover the test distribution, and then a worst-case solution will result in a more and more conservative solution even as $s$ increases. This corresponds to the case in Corollary \ref{coro:guarantee} if one adopts the naive density ratio $\hat{w}\equiv 1$, the right-hand-side will have a term of $D_{\text{TV}}(\mathcal{Q}, \mathcal{P}).$


\section{Experiments}\label{sec:experiment}

In this section, we illustrate the performance of our proposed algorithm using a simple example. Additional experiments are provided in Appendix \ref{sec:Experi}. We do not include direct comparisons with existing baseline methods, as our out-of-distribution (OOD) robust optimization setting is fundamentally different. Traditional robust optimization and distributionally robust optimization (DRO) approaches typically assume that training and test distributions are identical and rely on predefined uncertainty sets that fail to capture the types of distribution shifts we consider. As such, these methods cannot be directly applied without significant modifications. Furthermore, standardized benchmarks or datasets tailored to our contextual OOD setting are not currently available. Thus, our evaluations primarily aim to demonstrate and analyze the internal effectiveness, robustness, and empirical behavior of our method under controlled, simulated distribution shifts.

Continue with problem \eqref{LP:toy}, but instead of considering a single-dimensional $z$, set a multi-dimensional covariate $z=(z_1,\ldots, z_d)$. Let 
$$
c = (\text{sign}(z_1) +\epsilon)\cdot \sqrt{|z_1|}.
$$
$z$ and $\epsilon$ are independent. The training distribution of $z$ and $\epsilon$ are $\mathcal{N}(\mathbf{0}, \mathbf{I}_d)$ and $\mathcal{N}(0, 0.1)$ respectively (let $\mathbf{0}$ denote a zero vector of dimension $d$, and $\mathbf{I}_d$ an identity matrix of dimension $d$). Covariate shift occurs in the test phase, where the mean of $z$ is shifted from $\mathbf{0}$ to $\mathbf{1}_d$ (let $\mathbf{1}_d$ denote a $d$-dimensional vector with all components equaling $1$),
$$
\mathcal{P}_z=\mathcal{N}(\mathbf{0}, \mathbf{I}_d)\longrightarrow \mathcal{Q}_z = \mathcal{N}(\mathbf{1}_d, \mathbf{I}_d).
$$
We study the performance of Algorithm \ref{alg:BUQ-OOD} under different implementations of $\hat{f}$, $\hat{h}$, and $\hat{w}$, as listed below. All neural networks used in this experiment are feed-forward networks with one hidden layer of $16$ neurons. We consider the following setups. Prediction models $\hat{f}$: Lasso, random forest (``RF''), and neural network (``NN''). Quantile prediction models $\hat{h}$: linear quantile regression (``Linear''), gradient boosting regression (``GBR''), and neural network. Density ratio estimators $\hat{w}$: the trivial estimator (``Trivial'', $\hat{w}\equiv 1$), the kernel mean matching method (``KMM'', \citep{gretton2008covariate}), and the probabilistic classification method (``Cls-NN'', \citep{bickel2009discriminative}).

The risk level $\alpha=0.8$. The training dataset contains $4000$ randomly generated samples, with $2000$ of them used for learning $\hat{f}$, $1000$ for learning $\hat{h}$, and $1000$ for confidence adjustment. The test data contains $5000$ samples from the shifted test distribution, with $4000$ of them used to learn the density ratio estimator, and $1000$ of them for evaluating the algorithm's final performance. Unless otherwise stated, the default covariate dimension is $d=4$. We present the main results of our experiment below, with further details and additional experimental results provided in Appendix \ref{sec:Experi}.

Prediction Models Boost Performance: 
In Figure \ref{fig:vary_f_h}, Algorithm \ref{alg:BUQ-OOD} is evaluated under different implementations of $\hat{f}$ and $\hat{h}$. The default configuration uses neural networks for $\hat{f}$ and $\hat{h}$, and probabilistic classification for $\hat{w}$. Except for the function component explicitly studied in the experiment, all other components follow the default configuration. Compared to the rest of the benchmarks, the neural network usually gives a better prediction, and Figure \ref{fig:vary_f_h} demonstrates that this reduces the probability of getting a conservative solution $x^*=0$. In Appendix \ref{sec:Experi}, we further show that neural networks give more precise predictions and that the probability of being conservative is positively correlated with the mean squared error of the prediction models.   

\begin{figure}[ht!]
    \centering
    \begin{minipage}{0.48\columnwidth}
        \centering
        \includegraphics[width=\textwidth]{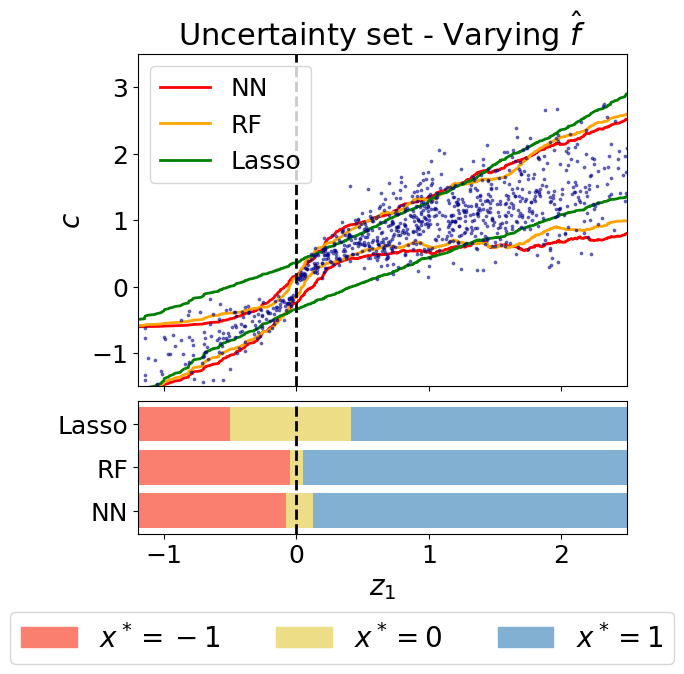} 
    \end{minipage}\hfill
    \begin{minipage}{0.48\columnwidth}
        \centering
        \includegraphics[width=\textwidth]{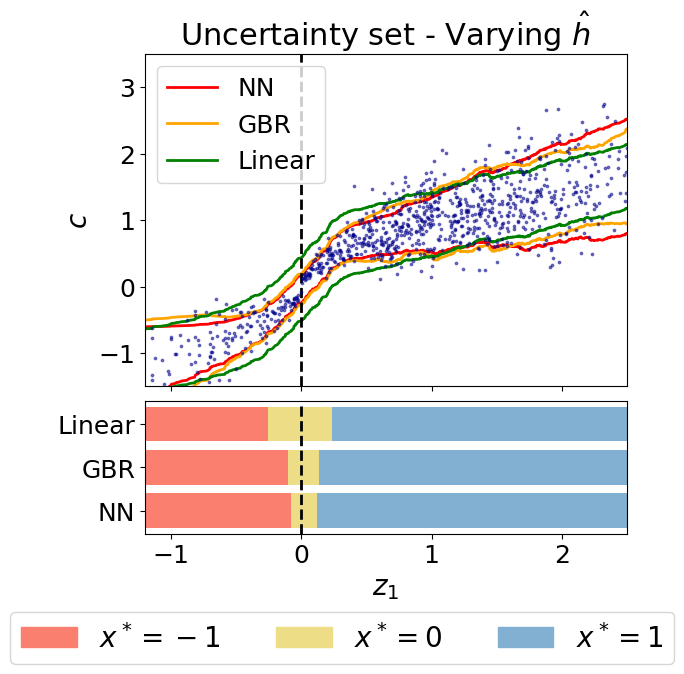} 
    \end{minipage}
    \caption{Performance of Algorithm \ref{alg:BUQ-OOD} under different prediction models $\hat{f}$ and quantile regression models $\hat{h}$. Each scattered point indicates a random test sample. The colored curves and the region between them indicate the (conditional) uncertainty sets. The left panel implements different $\hat{f}$ with $\hat{h}$ fixed to be a neural network, and the right panel implements different $\hat{h}$. The bar chart on the bottom indicates the corresponding optimal solution $x^*$ for $z_1$'s from different regions. Implementing a better prediction model (the ``NN'') generally reduces the probability of obtaining a conservative solution $x^*=0$.}
    \label{fig:vary_f_h}
\end{figure}

Density Ratio Estimator Avoids Mis-coverage:
Now we illustrate the role of the density ratio estimator $\hat{w}$. The left panel of Figure \ref{fig:figure_n_table} demonstrates the algorithm's performance under different $\hat{w}$. The figure indicates that, if no reweighting is applied (i.e. ``Trivially'' estimate the density ratio to be $1$), the empirical coverage rate can significantly drop below $0.8$. A potentially misleading observation from the bar chart is that a ``Trivial'' estimator appears to be less conservative than its counterparts. However, as discussed above, this deceptive advantage is achieved at the cost of violating the coverage guarantee. The table on the right panel of Figure \ref{fig:figure_n_table} reinforces the importance of a reliable density ratio estimator. By observing the ``Total'' rows of KMM and Cls-NN under different settings of the covariate dimension, we point out that a reliable density ratio estimator maintains the empirical coverage rate to be close to $0.8$, even when the prediction models are poor. Another observation is that the coverage guarantee is approximately achieved in a conditional sense: after splitting the domain of $z_1$ into two groups, with ``$z_1\leq 0$'' being a minority group and  ``$z_1>0$'' being major, the empirical coverage rate remains approximately $0.8$ for both groups.

\begin{figure}[h!]
    \centering
    \begin{minipage}{0.48\columnwidth}
        \centering
        \includegraphics[width=\textwidth]{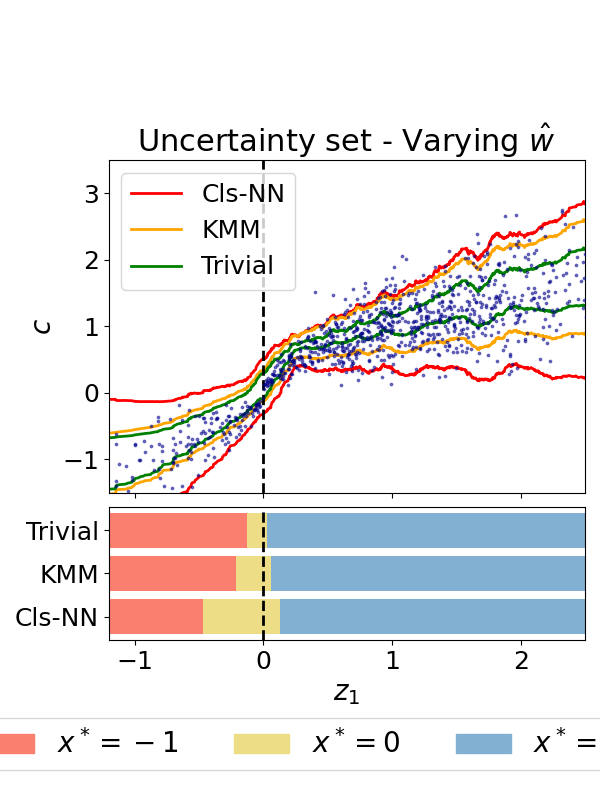} 
    \end{minipage}%
    \hfill
    \begin{minipage}{0.48\columnwidth}
        \centering
        \renewcommand{\arraystretch}{1.3}
        \resizebox{1. \columnwidth}{!}{
        \begin{tabular}{c c|c|c|c}
        \hline
        & & Trivial & KMM &  Cls-NN \\ \hline
        \multirow{3}{*}{$d=2$} & $z_1\leq 0$ & 0.80 & 0.77 &  0.79 \\ 
                               & $z_1> 0$   & 0.78 & 0.75 &  0.77 \\ 
                               & Total    & 0.78 & 0.75 &  0.78 \\ \hline
       \multirow{3}{*}{$d=4$} & $z_1\leq 0$ & 0.56 & 0.72 &  0.79\\ 
                               & $z_1> 0$   & 0.54 & 0.86 &  0.81\\ 
                               & Total    & 0.55 & 0.84  &  0.81 \\ \hline
        \multirow{3}{*}{$d=8$} & $z_1\leq 0$ & 0.53 & 0.71 & 0.86  \\ 
                               & $z_1> 0$   & 0.4 & 0.76 &  0.81 \\ 
                               & Total    & 0.42 & 0.75 &  0.83 \\ \hline
        \end{tabular}
        }
    \end{minipage}
    \caption{The figure on the left evaluates the performance of Algorithm \ref{alg:BUQ-OOD} under different density ratio estimators $\hat{w}$. With a better density ratio estimator (the ``NN''), the coverage rate of the uncertainty sets is closer to the target level of $0.8$. The table on the right gives the coverage rate of the uncertainty sets under different $\hat{w}$, different covariate dimensions $d$, and for different regions of $z_1$. For example, the row ``$z_1\leq 0$'' gives the coverage rate on samples with negative $z_1$, and the same holds for ``$z_1 > 0$''. The ``Total'' rows give the coverage rate on the whole test dataset.}
    \label{fig:figure_n_table}
\end{figure}

\textbf{Discussions.} In this paper, we study an out-of-distribution setting of robust optimization where the training distribution $\mathcal{P}$ might be different from the test distribution $\mathcal{Q}.$ Other than the method, the analysis, and the numerical experiments, we hope such a study calls for more attention to the meaning of robustness in the context of robust optimization. As noted earlier, existing works consider mainly (i) robustness against the statistical gap between the empirical distribution supported on $\mathcal{D}_{\text{Tr}}$ and the true distribution $\mathcal{P}$ or (ii) the robustness of a risk-sensitive objective such as VaR. Comparatively, the setting we study can be viewed as a robustness against distribution shift. Along this path, one may also consider the adversarial robustness where there might be a proportion of samples in the training data contaminated or adversarially generated from the other distributions than $\mathcal{P}$. We believe the study of such more general notions of robustness deserves more future works. 

\bibliographystyle{plainnat} 
\bibliography{main.bib}

\appendix

\newpage 

\section{Algorithm Complexity and Scalability}\label{appendix:complexity}

In this section, we analyze the computational complexity of Algorithm~\ref{alg:BUQ-OOD}, which primarily involves estimating three components: the conditional expectation estimator $\hat{f}$, the quantile regression model $\hat{h}$, and the density ratio estimator $\hat{w}$.
\begin{enumerate}
\item The conditional expectation estimation step typically employs methods such as LASSO regression, random forests, or neural networks. The computational complexity of this step generally scales linearly with the number of samples $N$ and the input dimension $d$.
\item The quantile regression model $\hat{h}$, trained using standard gradient-based methods, typically has a computational complexity of $\mathcal{O}(N d)$ per iteration.
\item The density ratio estimation step significantly affects the overall complexity. For covariate shift scenarios using logistic regression, the complexity is at least $\mathcal{O}(N d)$ per training iteration. Under label shift, the complexity of Kernel Mean Matching (KMM) is at least $\mathcal{O}(N^2)$ due to matrix inversion.
\end{enumerate}
Overall, the computational complexity of Algorithm~\ref{alg:BUQ-OOD} is primarily driven by the density ratio estimation step, particularly in label shift settings.

To address scalability challenges, practical solutions include dimensionality reduction via random projections or feature extraction techniques based on principal component analysis (PCA) or machine learning-based embeddings. Random projections, in particular, can substantially reduce the dimension $d$ to a lower-dimensional space $d'$, thereby reducing subsequent computational costs. Feature extraction methods further help preserve essential covariate information, offering a complexity saving without notable performance loss. Careful tuning of these dimensionality reduction techniques is recommended to achieve an optimal trade-off between computational efficiency and algorithmic performance.

\section{Reproductive Kernel Hilbert Space and Kernel Mean Matching}

\subsection{RKHS}\label{appdix:RKHS}
Let the tuple $(\mathcal{F}, \kappa, \mathcal{Z})$ denote a \textbf{reproductive kernel Hilbert space} (RKHS) $\mathcal{F}$ on the sample space $\mathcal{Z}$ with kernel $\kappa$. The RKHS is a Hilbert space of functions $f:\mathcal{Z}\to \mathbb{R}$, on which the inner product $\langle \cdot, \cdot \rangle_{\mathcal{F}}$ satisfies the reproducing property:
$$\langle f, \kappa(z, \cdot)\rangle_{\mathcal{F}} = f(z),\quad \forall f\in \mathcal{F}, z\in \mathcal{Z}.$$ 
That said, the evaluation of function $f$ on a single point $z$ can be viewed as an inner product between function $f$ and the evaluation operator $\kappa(z, \cdot)$. We define the \textbf{feature map} $\phi:\mathcal{Z}\to \mathcal{F}$ by $\phi[z]\coloneqq \kappa(z, \cdot)$. Throughout this part, we will use the square bracket $[\cdot]$ to denote mappings to functional spaces. Based on the reproducing property, the feature map satisfies $\langle \phi[z], \phi[z']\rangle_{\mathcal{F}} = \kappa(z, z')$ for all $z, z'\in \mathcal{Z}$.

The \textbf{kernel mean embedding} of a probability distribution $\mathcal{P}_z$ \citep{zhang2013domain}, denoted by $\mu_{\mathcal{F}}[\cdot]$, is a mapping from the space of all the probability distributions on $\mathcal{Z}$ to $\mathcal{F}$. The mapping is given by 
$$
\mu_{\mathcal{F}}[\mathcal{P}_z]\coloneqq \mathbb{E}_{Z\sim \mathcal{P}_z}[\phi[Z]].
$$
The kernel $\kappa$ is called \emph{characteristic} if the kernel mean embedding is injective: for $\mathcal{P}_z\neq \mathcal{P}_z'$, it holds that $\mu_{\mathcal{F}}[\mathcal{P}_z]\neq \mu_{\mathcal{F}}[\mathcal{P}_z']$. And following \cite{huang2006correcting}, the operator $\mu_{\mathcal{F}}[\cdot]$ is bijective if $\kappa$ is a universal kernel in the sense of \cite{steinwart}. The core idea of the \textbf{kernel mean matching} is estimate $\mathcal{P}_z$ with a $\hat{\mathcal{P}}_z$ satisfying $\mu_{\mathcal{F}}[\mathcal{P}_z]\approx \mu_{\mathcal{F}}[\hat{\mathcal{P}}_z]$.

\subsection{Cross-covariance Operator}
Consider the joint random variable $(Z,C)\in \mathcal{Z}\times \mathcal{C}$. Define two RKHS's by $(\mathcal{F}, \kappa, \mathcal{Z})$ and $(\mathcal{G}, \eta, \mathcal{C})$ respectively. Let $\mathcal{P}$ denote the joint distribution of $(Z, C)$ and $\mathcal{P}_z, \mathcal{P}_c$ their marginal distributions respectively, then the \textbf{cross-covariance operator} $\mathcal{A}_{Z,C}$ (with the dependency on $\mathcal{P}$ omitted) is defined as \citep{CrossCovariance}
$$
\mathcal{A}_{Z,C}\coloneqq \mathbb{E}_{(Z,C)\sim \mathcal{P}}\left[\phi[Z]\otimes \psi[C]\right] - \mathbb{E}_{Z\sim \mathcal{P}_z}[\phi[Z]]\otimes \mathbb{E}_{C\sim \mathcal{P}_c}[\psi[C]].
$$
In the following, we will omit the explicit distribution of the random variable in the subscript of $\mathbb{E}$ and replace expressions like $\mathbb{E}_{(Z,C)\sim \mathcal{P}}$ with $\mathbb{E}_{Z,C}$. The operator $\mathcal{A}_{Z,C}[\cdot]$ can be viewed as a mapping from $\mathcal{G}$ to $\mathcal{F}$ in the following way: by noting that $\langle \psi[C], g\rangle_{\mathcal{G}} = g(C)$ for any $g\in \mathcal{G}$, we can define $\mathcal{A}_{Z,C}[g]$ as
$$
\mathcal{A}_{Z,C}[g]\coloneqq \mathbb{E}_{Z,C}[g(C)\cdot \phi[Z]] - \mathbb{E}_C[g(C)]\cdot \mathbb{E}_Z[\phi[Z]].
$$
For any functions $f\in \mathcal{F}$ and $g\in \mathcal{G}$, the cross-covariace operator has the following property:
$$
\langle f, \mathcal{A}_{Z,C}[g] \rangle_{\mathcal{F}} = \mathbb{E}_{Z,C}[f(Z)\cdot g(C)] - \mathbb{E}_Z[f(Z)]\cdot \mathbb{E}_C[g(C)],
$$
which exactly corresponds to the covariance between $f(Z)$ and $g(C)$. The \textbf{conditional embedding operator} $\mathcal{U}_{Z|C}$ is a mapping from $\mathcal{G}$ to $\mathcal{F}$ such that, for any $c\in \mathcal{C}$, the follow equation holds:
\begin{equation}\label{eq:cond_embed_RKHS}
    \mathcal{U}_{Z|C}[\psi(c)] = \mu_{\mathcal{F}}[\mathcal{P}_{z|c}].
\end{equation}
In other words, $\mathcal{U}_{Z|C}$ maps the feature map $\psi(c)$ to the kernel mean embedding of the conditional distribution $Z|C=c$. Following \cite{song2009hilbert}, if the cross-covariance operator $\mathcal{A}_{C,C}$ is invertible, by defining
$$
\mathcal{U}_{Z|C}\coloneqq \mathcal{A}_{Z,C} \mathcal{A}_{C,C}^{-1}
$$
the equation \eqref{eq:cond_embed_RKHS} is satisfied. To see this, it is sufficient to show that $\langle f, \mathcal{A}_{Z,C} \mathcal{A}_{C,C}^{-1}[\psi[c]] \rangle_{\mathcal{F}} = \langle f, \mu_{\mathcal{F}}[\mathcal{P}_{z|c}] \rangle_{\mathcal{F}}$ for all $f\in \mathcal{F}$, and this holds following the derivations below,
$$
\begin{aligned}
    \langle f, \mu_{\mathcal{F}}[\mathcal{P}_{z|c}] \rangle_{\mathcal{F}} &= \mathbb{E}_{z|c}[f(Z)|c]\\
    &= \langle \mathbb{E}_{z|c}[f(Z)|C], \psi[c] \rangle_{\mathcal{G}}\\
    &= \langle \mathcal{A}_{C,C}[\mathbb{E}_{z|c}[f(Z)|C]], \mathcal{A}_{C,C}^{-1}[\psi[c]] \rangle_{\mathcal{G}}\\
    &= \langle \mathcal{A}_{C,Z}[f], \mathcal{A}_{C,C}^{-1}[\psi[c]]\rangle_{\mathcal{G}}\\
    &= \langle f, \mathcal{A}_{Z,C}\mathcal{A}_{C,C}^{-1}[\psi[c]]\rangle_{\mathcal{F}}.
\end{aligned}
$$
Equation \eqref{eq:cond_embed_RKHS} directly implies the following property, which serves as the key step of the KMM procedure in the density ratio estimation method of \cite{zhang2013domain}:
$$
\mathcal{U}_{Z|C}[\mu_{\mathcal{G}}[\mathcal{P}_c]] = \mu_{\mathcal{F}}[\mathcal{P}_z].
$$
\subsection{Empirical Estimations}\label{appdix:empirical_estimate}
In this section, we briefly outline how to estimate the quantities above using i.i.d. samples ${(z_i, c_i)}_{i=1}^N$ drawn from $\mathcal{P}$. By Mercer's theorem, the feature map $\phi[z]$ can be represented as a column vector in a (possibly infinite-dimensional) Hilbert space. We use $\phi[z] \psi[c]^\top$ to denote the outer product $\phi[z] \otimes \psi[c]$. Define the matrices $\mathbf{\Phi} \coloneqq (\phi[z_1], \dots, \phi[z_N])$ and $\mathbf{\Psi} \coloneqq (\psi[c_1], \dots, \psi[c_N])$. Further define $\mathbf{K}, \mathbf{H}\in \mathbb{R}^{N\times N}$ with $K_{i,j}=\kappa(z_i, z_j)$ and $H_{i,j}=\eta(c_i, c_j)$. The empirical estimators are indicated by adding a hat symbol $\hat{\cdot}$ to the original quantities, with their explicit expressions shown below:
$$
\begin{aligned}
    \mu_{\mathcal{F}}[\mathcal{P}_z]\approx \hat{\mu}_{\mathcal{F}} &\coloneqq \dfrac{1}{N}\sum_{i=1}^N \phi(z_i), \\
    \mu_{\mathcal{G}}[\mathcal{P}_c]\approx \hat{\mu}_{\mathcal{G}} &\coloneqq \dfrac{1}{N}\sum_{i=1}^N \psi(c_i), \\
    \mathcal{A}_{Z, C}\approx \hat{\mathcal{A}}_{Z,C} & \coloneqq \dfrac{1}{N}(\mathbf{\Phi} - \hat{\mu}_{\mathcal{F}} \mathbf{1}^\top ) (\mathbf{\Psi} - \hat{\mu}_{\mathcal{G}} \mathbf{1}^\top )^\top\\
    &= \dfrac{1}{N} \mathbf{\Phi} \left(I - \frac{1}{N} \mathbf{1} \mathbf{1}^\top\right) \mathbf{\Psi}^\top,\\
    \mathcal{A}_{C, C}^{-1}\approx \hat{\mathcal{A}}_{C, C}^{-1} &\coloneqq N\cdot \mathbf{\Psi} \mathbf{H}^{-1} \left(I - \frac{1}{N} \mathbf{1} \mathbf{1}^\top\right)^{-1} \mathbf{H}^{-1}\mathbf{\Psi}^\top, \\
    \mathcal{U}_{Z|C}\approx \hat{\mathcal{U}}_{Z|C} &\coloneqq \mathbf{\Phi} \mathbf{H}^{-1}\mathbf{\Psi}^\top.
\end{aligned}
$$

\section{Illustrative Example}\label{sec:illustrative_example}
Consider the illustrative example:
\begin{align}
    \min_x \ & \ \text{VaR}_{\alpha}^{\mathcal{Q}}(c\cdot x|z)  \ \ \text{s.t.}\ -1\le x\le 1, \label{LP:toy_appdix}
\end{align}
where $\alpha\in (0.5, 1)$ and $c=z +\epsilon$. In the training distribution $\mathcal{P}$, the $z$ and $\epsilon$ are independent and respectively follow $\mathcal{N}(0, \sigma_1^2)$ and $\mathcal{N}(0, \sigma_2^2)$. We list the explicit expressions for the following probability distributions:
$$
\mathcal{P}_z= \mathcal{N}(0, \sigma_1^2), \quad \mathcal{P}_c= \mathcal{N}(0, \sigma_1^2 + \sigma_2^2),
$$
$$
\mathcal{P}_{c|z}= \mathcal{N}(z, \sigma_2^2), \quad \mathcal{P}_{z|c}= \mathcal{N}\left(\frac{\sigma_1^2\cdot c}{\sigma_1^2 + \sigma_2^2}, \frac{\sigma_1^2\sigma_2^2}{\sigma_1^2 + \sigma_2^2}\right),
$$
$$
(c,z)\sim\mathcal{P}=\mathcal{N}\left(
\begin{pmatrix}
0\\
0
\end{pmatrix}
, \begin{pmatrix}
    \sigma_1^2 + \sigma_2^2 & \sigma_1^2\\
    \sigma_1^2 & \sigma_1^2 
\end{pmatrix}
\right).
$$
We describe the explicit form of $\mathcal{Q}$ under the following two distribution shift scenarios:
\begin{itemize}
    \item Covariate shift: let $s$ represent the extent of covariate shift. The distribution of $z$ is shifted from $\mathcal{P}_z = \mathcal{N}(0, \sigma_1^2)$ to $\mathcal{Q}_z = \mathcal{N}(s, \sigma_1^2)$, while the conditional distribution of $c|z$ remains to be $\mathcal{N}(z, \sigma_2^2)$. The joint distribution of $(c,z)$ is shifted to $\mathcal{Q} =\mathcal{N}\left(
 \begin{pmatrix}
s\\
s
\end{pmatrix}
, \begin{pmatrix}
    \sigma_1^2+\sigma_2^2 & \sigma_1^2\\
    \sigma_1^2 & \sigma_1^2
\end{pmatrix}
\right)$.
    \item Label shift: let $s$ represent the extent of label shift. The distribution of $c$ is shifted from $\mathcal{P}_c = \mathcal{N}(0, \sigma_1^2 + \sigma_2^2)$ to $\mathcal{Q}_c = \mathcal{N}(s, \sigma_1^2 + \sigma_2^2)$, while the conditional distribution $z|c$ remains to be $\mathcal{N}\left(\frac{\sigma_1^2\cdot c}{\sigma_1^2 + \sigma_2^2}, \frac{\sigma_1^2\sigma_2^2}{\sigma_1^2 + \sigma_2^2}\right)$. It follows that the distribution of $z$ in $\mathcal{Q}$ is shifted to $\mathcal{N}\left(\frac{\sigma_1^2\cdot s}{\sigma_1^2 + \sigma_2^2}, \sigma_1^2\right)$, and the distribution of $\epsilon$ is shifted to $\mathcal{N}\left(\frac{\sigma_2^2\cdot s}{\sigma_1^2 + \sigma_2^2}, \sigma_2^2\right)$. The joint distribution of $(c,z)$ is shifted to $ \mathcal{Q} = \mathcal{N}\left(
\begin{pmatrix}
s \\
\frac{\sigma_1^2\cdot s}{\sigma_1^2 + \sigma_2^2}
\end{pmatrix}
, \begin{pmatrix}
    \sigma_1^2 + \sigma_2^2 & \sigma_1^2\\
    \sigma_1^2 & \sigma_1^2 
\end{pmatrix}
\right)
$.
\end{itemize}
The solution to \eqref{LP:toy_appdix} admits a simple form under the idealized setting, as given in \eqref{eq:our_oracle_solution} and restated below:
\begin{equation}
    x^* = \left\{
\begin{aligned}
    &1, & \text{VaR}^{\mathcal{Q}}_\alpha(c|z) \leq 0,\\
    &-1, & \text{VaR}^{\mathcal{Q}}_{1-\alpha}(c|z) \geq 0,\\
    &0, & \text{otherwise},
\end{aligned}
\right.
\end{equation}
The probability of yielding an over-conservative solution is
\begin{equation}\label{eq:toy_solution_apx}
\begin{aligned}
    \mathbb{P}(x^* =0) &= \mathbb{P}_{z\sim \mathcal{Q}_z}(\text{VaR}_{1-\alpha}(c|z)\leq 0\leq \text{VaR}_\alpha(c|z)).
\end{aligned}
\end{equation}
Specifically, in the covariate shift setting, the probability equals
$$
\mathbb{P}(x^* =0) = \Phi\left(\frac{\sigma_2}{\sigma_1}\Phi^{-1}(\alpha)-\frac{s}{\sigma_1}\right) - \Phi\left(-\frac{\sigma_2}{\sigma_1}\Phi^{-1}(\alpha)-\frac{s}{\sigma_1}\right),
$$
and in the label shift setting, the probability also equals
$$
\mathbb{P}(x^* =0) = \Phi\left(\frac{\sigma_2}{\sigma_1}\Phi^{-1}(\alpha)-\frac{s}{\sigma_1}\right) - \Phi\left(-\frac{\sigma_2}{\sigma_1}\Phi^{-1}(\alpha)-\frac{s}{\sigma_1}\right).
$$

\paragraph{Worst-case Approach} Alternative to our density ratio-based approach is the worst-case approach.  Construct the worst-case ball: 
$$
\mathcal{B}(R)\coloneqq \left\{\mathcal{N}\left(
r
, \begin{pmatrix}
    \sigma_1^2  + \sigma_2^2 & \sigma_1^2\\
    \sigma_1^2 & \sigma_1^2
\end{pmatrix}
\right)\ : \ \|r\| \leq R\right\}.
$$
Following the discussions in the main context, the objective of the worst-case approach is given by \eqref{LP:toy_DRO} and restated below:
$$
    \min_x \max_{\mathcal{Q}\in \mathcal{B}(R)} \  \ \text{VaR}^{\mathcal{Q}}_{\alpha}(c\cdot x|z)  \ \ \text{s.t.}\ -1\le x\le 1.
$$
The probability of yielding an over-conservative solution is 
$$
\mathbb{P}(x^* =0) = \mathbb{P}_{z\sim \mathcal{Q}_z}(\underline{\text{VaR}}_{1-\alpha}(c|z)\leq 0\leq \overline{\text{VaR}}_\alpha(c|z)).
$$
In the covariate shift setting, we have $R = \sqrt{2} s$ in order that $\mathcal{B}(R)$ covers $\mathcal{Q}$. In the label shift setting, $R=\sqrt{1+\frac{\sigma_1^4}{(\sigma_1^2 + \sigma_2^2)^2}}\cdot  s$. The explicit form of the conservative probability is 
$$
\begin{aligned}
    \mathbb{P}(x^* =0) &=\Phi\left(\frac{\sigma_2}{\sigma_1}\Phi^{-1}(\alpha)+\frac{\sqrt{2}\cdot s}{\sigma_1}-\frac{s}{\sigma_1}\right) \\
    &\phantom{=} - \Phi\left(-\frac{\sigma_2}{\sigma_1}\Phi^{-1}(\alpha)-\frac{\sqrt{2}\cdot s}{\sigma_1}-\frac{s}{\sigma_1}\right)
\end{aligned}
$$
for the covariate shift setting and 
$$
\begin{aligned}
    \mathbb{P}(x^* =0) &=\Phi\left(\frac{\sigma_2}{\sigma_1}\Phi^{-1}(\alpha)+\sqrt{\frac{1}{\sigma_1^2}+\frac{\sigma_1^2}{(\sigma_1^2 + \sigma_2^2)^2}}\cdot  s -\frac{\sigma_1\cdot s}{\sigma_1^2 + \sigma_2^2}\right) \\
    &\phantom{=} - \Phi\left(-\frac{\sigma_2}{\sigma_1}\Phi^{-1}(\alpha)-\sqrt{\frac{1}{\sigma_1^2}+\frac{\sigma_1^2}{(\sigma_1^2 + \sigma_2^2)^2}}\cdot  s-\frac{\sigma_1\cdot s}{\sigma_1^2 +\sigma_2^2}\right)
\end{aligned}
$$
for the label shift setting.

Figure \ref{fig:toy_vs_OOD} is plotted by setting $\sigma_1 = \sigma_2 = 1$.

\section{More Experiments and Experiment Details}\label{sec:Experi}
We consider several implementations of the three components: $\hat{f}$, $\hat{h}$ and $\hat{w}$ for our Algorithm \ref{alg:BUQ-OOD}. For the expectation predictor $\hat{f}$ which predicts $\mathbb{E}[c|z]$ with $z$, we consider using Lasso regression, random forest, and neural network. On different training datasets, the regularization parameter $\lambda$ of the Lasso regression is selected as the optimal parameter in range $[0,4]$; for the random forest searches the best number of trees in range $[100, 1000]$ and the best depth in range $[10, 80]$; the neural network has a single middle layer with $16$ neurons, with the learning rate set to $0.01$ and the training epochs set to $500$. For the quantile predictor $\hat{h}$, we implement linear quantile regression, gradient boosting regression, and neural network, all trained by minimizing the quantile loss. For the density estimator, we mainly consider estimators for the covariate shift setting, including the trivial estimator, the kernel mean matching method introduced in \cite{gretton2008covariate}, and the probabilistic classification method introduced in \cite{bickel2009discriminative}. The computational complexity of the kernel mean matching method scales at least quadratically with the data size, so we only sample $200$ data from the training distribution and $200$ data from the test distribution to run the algorithm. 

The performance metrics we consider include the coverage rate and the $\alpha$-quantile of the objective $c^\top x$. Specifically, we evaluate the marginal and conditional coverage rates of the uncertainty sets generated from different implementations of our algorithm, and how different they are from the target coverage rate. The $\alpha$-quantile of the objective, denoted by $\mathrm{VaR}_{\alpha}^{\mathcal{Q}}(c^\top x|z)$, is evaluated by generating $100$ samples of $c$ from the underlying conditional distribution $\mathcal{Q}_{c|z}$ and then calculate the empirical $\alpha$-quantile of $c^\top x$.

\subsection{Additional Experiments on the Simple Example}
This section presents additional experimental results for the simple optimization problem discussed in Section \ref{sec:experiment}. In Figure \ref{fig:additional_simple}, we experiment with different choices of $\hat{f}$ and $\hat{h}$, and different covariate dimensions $d$. Specifically, the implementations of $\hat{f}$ and $\hat{h}$ follow the setup of Figure \ref{fig:vary_f_h}, and the $d$'s we consider include $d=2, 4, 8$. Both the mean squared error and the quantile loss are evaluated on the test dataset, with the $c$ values known. The results indicate that the algorithm tends to perform better, as reflected by a lower conservative probability, when the prediction models $\hat{f}$ and $\hat{h}$ have lower prediction errors. Further, for both $\hat{f}$ and $\hat{h}$, neural networks (NN) generally achieve strong predictive performance. 

\begin{figure}[ht!]
    \centering
    \begin{minipage}{0.5\columnwidth}
        \centering
        \includegraphics[width=\textwidth]{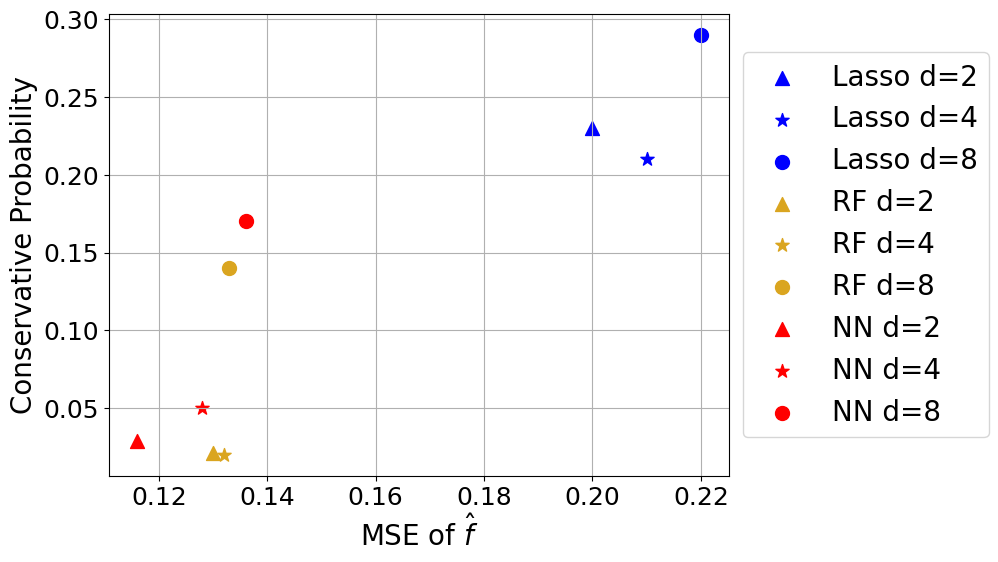} 
    \end{minipage}\hfill
    \begin{minipage}{0.5\columnwidth}
        \centering
        \includegraphics[width=\textwidth]{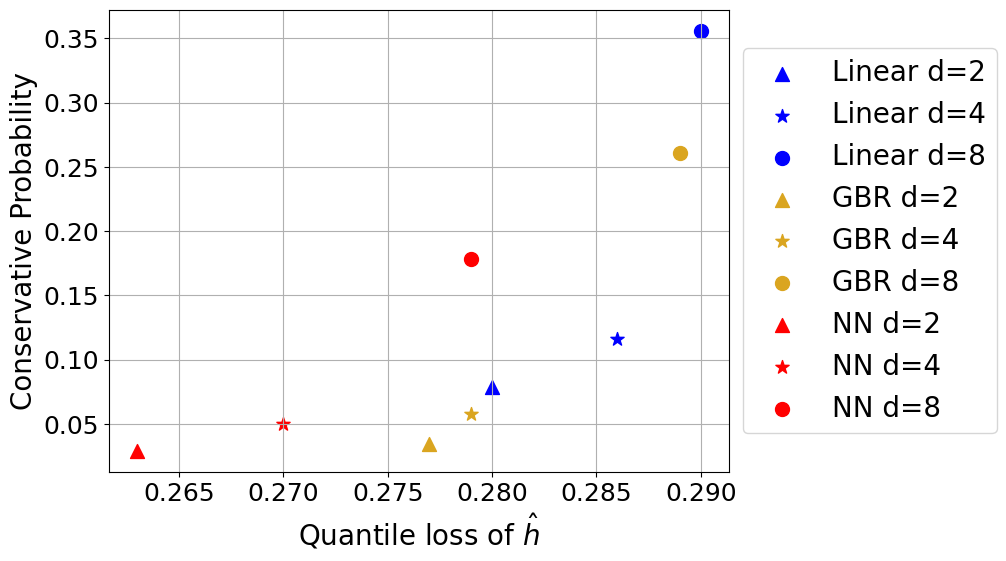} 
    \end{minipage}
    \caption{The quality of the predictors affects the performance of Algorithm \ref{alg:BUQ-OOD}. In the left figure, the x-axis represents the mean squared error (MSE) of $\hat{f}$ on the test dataset, while in the right figure, the x-axis represents the quantile loss of $\hat{h}$. For both metrics, lower values indicate higher predictor quality. The y-axis shows the probability of obtaining an over-conservative solution at $x^* = 0$, where a lower probability indicates better algorithm performance. Both figures illustrate that improved prediction models enhance the performance of the algorithm.}
    \label{fig:additional_simple}
\end{figure}

\subsection{Shortest Path Problem and Fractional Knapsack Problem}
We test our algorithm on two practical LP settings. The risk level $\alpha$ is fixed to $0.8$ and we consider the covariate shift of the test distribution.

The \textit{shortest path problem} seeks a path between two vertices in a weighted graph such that the total accumulated cost along the path's edges is minimized. Specifically, we consider the shortest path problem on a $5\times 5$ grid with $25$ nodes and $40$ edges. The objective is to find the shortest path from the top-left node to the bottom-right node. For $i=0,\ldots, 4$ and $j = 0,\ldots, 4$, let $(i,j)$ denote the node on the $i$-th row and $j$-th column, with the top-left node located at the coordinate $(0,0)$ and the bottom-right node at the coordinate $(5,5)$. The tuple $((i,j), (i', j'))_e$ denotes the edge between node $(i,j)$ and $(i', j')$. Let $V$ denote the set of all nodes and $E$ denote all edges. For $((i,j), (i', j'))_e\in E$, use $c_{(i,j), (i', j')}$ to represent the cost of the edge $((i,j), (i', j'))_e$ (for the undirected graph that we consider, set $c_{(i,j), (i', j')} = c_{(i',j'), (i, j)}$). The decision variables are $x_{(i,j), (i', j')}$ for all $((i,j), (i', j'))_e\in E$, where $x_{(i,j), (i', j')}=1$ denotes a directed path segment from the node $(i,j)$ to the node $(i', j')$. The shortest path problem then has the following risk-sensitive LP formulation:
\begin{equation}
    \begin{aligned}
    \min_{x} &\ \text{VaR}_\alpha \left(\sum_{((i,j), (i', j'))_e \in E} c_{(i,j), (i', j')}\cdot x_{(i,j), (i', j')}\right)\\
        \text{s.t.}\ & \ \sum_{(i', j'):((i,j), (i', j'))_e \in E}\left(x_{(i,j), (i', j')} - x_{(i',j'), (i, j)}\right) = \left\{
            \begin{aligned}
                1 & \quad (i,j) = (0,0)\\
                -1 & \quad (i,j) = (4,4)\\
                0 & \quad \text{otherwise}\\
            \end{aligned}
        \right.
\end{aligned}\label{eq:shortest_path_LP}
\end{equation}
Extending to the OOD robust formulation, we assume that the dynamic of the cost vector $c\in\mathbb{R}^{40}$ is controlled by the covariate $z\in \mathbb{R}^d$ (we fix $d=10$ in this experiment) through
$$
\mathcal{P}_{c|z} \sim \left(\left(\frac{1}{\sqrt{d}}\Theta z+3\right)^{5}+1\right)\circ \epsilon,
$$
where $\Theta\in\mathbb{R}^{40\times d}$ is a 0-1 matrix with each entry generated independently from a $\text{Bernoulli}(0.5)$ distribution. The $\Theta$ matrix is fixed once it is generated. The $\circ$ symbol denotes an elementwise multiplication. Each element of the random vector $\epsilon\in \mathbb{R}^{40}$ is generated independently from $\text{Uniform}(\frac{3}{4},\frac{5}{4})$. In the training data, the covariate $z$ is generated from $\mathcal{P}_z=\mathcal{N}(0, \mathbf{I}_d)$, and in the test data $z$ is generated from $\mathcal{Q}_z=\mathcal{N}(\mathbf{1}_d, \mathbf{I}_d)$. The objective of the OOD formulation replaces the $\text{VaR}_\alpha(\cdot)$ part in \eqref{eq:shortest_path_LP} with $\text{VaR}_\alpha^{\mathcal{Q}}(\cdot)$.

The \textit{fractional knapsack problem} models the case where customers select items that maximize their utility, under a budget constraint. We consider a simple setting with $20$ items. The decision variables $x=(x_1,\ldots, x_{20})$ denote the fractions (in range $[0, 1]$) of items to purchase, $c\in \mathbb{R}^{20}$ denote the item utilities, $p\in \mathbb{R}^{20}$ denote the price of items, and $B>0$ the total budget. The fractional knapsack problem has the following risk-sensitive LP formulation:
\begin{equation}
    \begin{aligned}
        \min_{x} &\ \text{VaR}_\alpha(- c^\top x)\\
        \text{s.t.}\ & \ p^\top x\leq B\\
        & x_i \in [0, 1], & i=1,\ldots, 20.
    \end{aligned}
\end{equation}
The OOD robust formulation considers covariate $z\in \mathbb{R}^d$ (with $d=10$) and assumes the conditional distribution of $c|z$ to be
$$
\mathcal{P}_{c|z}\sim (\Theta z)^2\circ \epsilon,
$$
where $\Theta\in \mathbb{R}^{20\times d}$ is a 0-1 matrix with each entry generated independently from a $\text{Bernoulli}(0.5)$ distribution.  Each element of $\epsilon\in \mathbb{R}^{20}$ is generated independently from $\text{Uniform}(\frac{4}{5},\frac{6}{5})$. The training distribution of the covariate $z$ is $\mathcal{P}_z=\mathcal{N}(0, \mathbf{I}_d)$ and the test distribution is $\mathcal{Q}_z=\mathcal{N}(\mathbf{1}_d, \mathbf{I}_d)$. 

In Figure \ref{fig:practical}, we compare our algorithm against multiple benchmarks on the two LP settings above. As there is no existing algorithm that handles covariate shift in the risk-sensitive setting, we just implement the existing benchmark methods and evaluate them in the test environment. The ``Ellipsoid'' method ignores the contextual information and calibrates the ellipsoid to achieve an empirical coverage rate of $\alpha$ on the training samples. The ``DCC'' and ``IDCC'' algorithms are proposed in \cite{chenreddy2022data}. The ``$k$NN'' algorithm is a conditional robust optimization method proposed in \citep{ohmori2021predictive}. The ``Ours-Trivial'' and ``Ours'' both implement our Algorithm \ref{alg:BUQ-OOD} (with $\hat{f}$ and $\hat{h}$ being neural networks).  ``Ours-Trivial'' sets a trivial density ratio estimator $\hat{w} \equiv 1$, while ``Ours'' uses the probabilistic classification method (which is shown to enjoy the best performance based on the previous experiments) to estimate $\hat{w}$. The result demonstrates that our algorithm generally outperforms the rest benchmarks, and leveraging the density ratio information further improves the performance on the test dataset.

\begin{figure}[ht!]
    \centering
    \begin{minipage}{0.5\columnwidth}
        \centering
        \includegraphics[width=\textwidth]{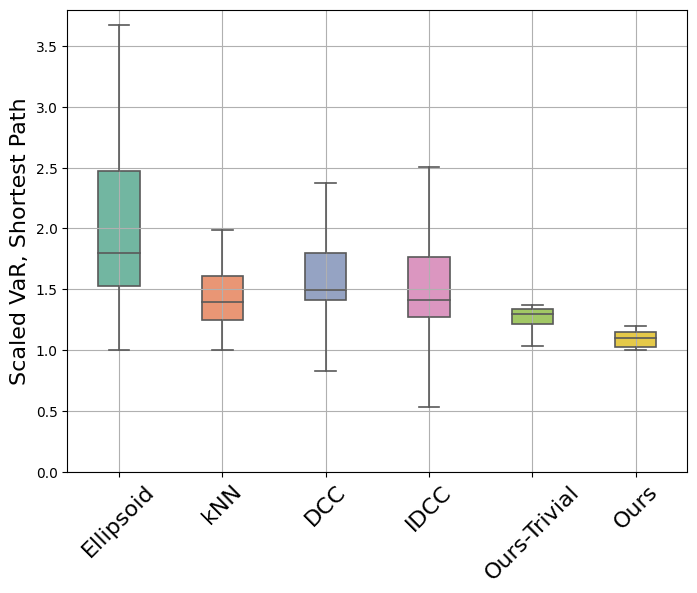} 
    \end{minipage}\hfill
    \begin{minipage}{0.5\columnwidth}
        \centering
        \includegraphics[width=\textwidth]{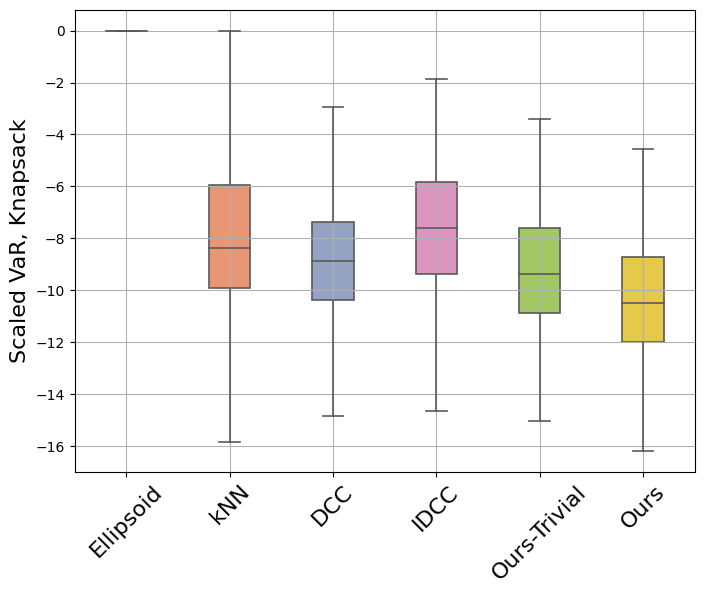} 
    \end{minipage}
    \caption{The scaled VaR for the objective in the shortest path and fractional knapsack problems. The x-axis lists the algorithms we test. The ``Ellipsoid'', ``kNN'', ``DCC'' and ``IDCC'' algorithms are introduced in the experiment descriptions. ``Ours-Trivial'' implements our algorithm but uses a trivial density ratio estimator $\hat{w}\equiv 1$. ``Ours'' implements our algorithm and uses the probabilistic classification method to estimate $\hat{w}$. Amongst the benchmark algorithms, our proposed Algorithm \ref{alg:BUQ-OOD} generally achieves the lowest VaR.}
    \label{fig:practical}
\end{figure}

\section{Proof of Theorems}
\subsection{Proof of Theorem \ref{thm:cvg_alpha}}
Let $N\coloneqq |\mathcal{D}_{2}|$ and define $x_i\coloneqq (c_i,z_i)$ for $i=1,\ldots, N$. The final $\eta$ term produced from Algorithm \ref{alg:BUQ-OOD} is a function of $(x_1,\ldots, x_N)$, which we define below as the $\hat{\eta}(\cdot)$ function
$$
\hat{\eta}(x_1, \ldots, x_N)\coloneqq \min\left\{\eta\geq 0 \ : \ \sum_{i=1}^N w(x_i)\cdot \mathbbm{1}\{|\hat{f}(z_i)-c_i|\leq \eta\cdot \hat{h}(z_i)\}\geq \alpha\cdot \sum_{i=1}^N w(x_i)\right\}.
$$
For each $x_i$, let $\Tilde{\eta}(x_i)$ denote the minimum $\eta$ such that $[\hat{f}(z_i) -\eta\cdot \hat{h}(z_i), \hat{f}(z_i) +\eta\cdot \hat{h}(z_i)]$ covers $c_i$. The $\Tilde{\eta}(\cdot)$ function is formally defined below
$$
\Tilde{\eta}(x_i)\coloneqq \min\left\{\eta\geq 0 \ : \ 
 |\hat{f}(z_i)-c_i|\leq \eta\cdot \hat{h}(z_i)\right\}.
$$
Under this new notation, the form of $\hat{\eta}(\cdot)$ can be formulated as
$$
\hat{\eta}(x_1,\ldots, x_N) = \min\left\{\eta\geq 0 \ : \ \sum_{i=1}^N w(x_i)\cdot \mathbbm{1}\{\eta \geq \Tilde{\eta}(x_i)\}\geq \alpha\cdot \sum_{i=1}^N w(x_i) \right\}.
$$
Further, if the density ratio estimate in Algorithm \ref{alg:BUQ-OOD} is perfect, then the event $\{c_{\mathrm{new}} \in \mathcal{U}_\alpha(z_{\mathrm{new}})\}$ is equivent to $\{\Tilde{\eta}(x_{\mathrm{new}})\leq \hat{\eta}(x_1,\ldots, x_N)\}$. We restate Theorem \ref{thm:cvg_alpha} below and provide a proof.
\begin{thm} Under Assumption \ref{assump:u_l_bound}, suppose the density ratio estimate is perfect, i.e., $\hat{w}(c,z)=w(c,z)=q(c,z)/p(c,z)$, then the uncertainty set $\mathcal{U}_{\alpha}(z)$ generated by Algorithm \ref{alg:BUQ-OOD} satisfies the following coverage guarantee,
    \begin{equation*}
        \Big| \mathbb{P}\left(\Tilde{\eta}(x_{\mathrm{new}})\leq \hat{\eta}(x_1,\ldots, x_N)\right) -\alpha\Big|\leq \dfrac{1}{N+1}\cdot \dfrac{\bar{w}}{\underline{w}}
    \end{equation*}
    where the probability on the left-hand-side is with respect to $x_{\mathrm{new}}\sim \mathcal{Q}$ and $x_1,\ldots, x_N \sim \mathcal{P}$.
\end{thm}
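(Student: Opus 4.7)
The proof plan adapts the weighted conformal prediction machinery of Tibshirani et al.\ (2020) to the leave-one-out quantile that Algorithm~\ref{alg:BUQ-OOD} actually computes. The distributional input is a weighted exchangeability claim: because $w = q/p$ exactly, the joint density of $(x_1, \ldots, x_N, x_{\mathrm{new}})$ equals $\prod_{i=1}^{N+1} p(e_i)$ multiplied by the single factor $w(x_{\mathrm{new}})$, and therefore conditional on the unordered multiset $E = \{x_1, \ldots, x_N, x_{\mathrm{new}}\}$ one has $\mathbb{P}(x_{\mathrm{new}} = e \mid E) = w(e) / \sum_{e' \in E} w(e')$. All subsequent reasoning is conditional on $E$; the final bound follows by taking outer expectation.

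I then reformulate coverage in rank coordinates. Let $V_j \coloneqq \tilde{\eta}(x_j)$, sort the $N+1$ augmented scores as $V_{(1)} \le \cdots \le V_{(N+1)}$ with weights $w_{(1)}, \ldots, w_{(N+1)}$, total $S = \sum_j w_{(j)}$, partial sums $C_j = \sum_{i \le j} w_{(i)}$, and benchmark rank $k^* = \min\{j : C_j \ge \alpha S\}$ for the full-sample weighted $\alpha$-quantile. If the test point occupies rank $K$, the algorithm's threshold equals $V_{(j^*(K))}$ where $j^*(K)$ is the smallest $j \ne K$ with $\sum_{i \le j,\, i \ne K} w_{(i)} \ge \alpha(S - w_{(K)})$, and the coverage event becomes $K \le j^*(K)$. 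The conditional coverage probability is then $\sum_{K \,:\, K \le j^*(K)} w_{(K)}/S$.

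The technical heart is a three-way case split on $K$ versus $k^*$. For $K < k^*$ the definition of $k^*$ forces $C_j \le C_{K-1} < \alpha S - w_{(K)} < \alpha(S - w_{(K)})$ for every $j < K$ (the last strict inequality crucially uses $\alpha < 1$), so no $j < K$ meets the threshold and coverage holds. For $K > k^*$ the threshold is already reached at $j = k^*$ because $C_{k^*} \ge \alpha S \ge \alpha(S - w_{(K)})$ and the subtraction $-w_{(K)}$ does not activate at $j = k^* < K$, hence $j^*(K) \le k^* < K$ and coverage fails. Only the boundary $K = k^*$ is ambiguous. Therefore the coverage index set $A$ satisfies $\{1, \ldots, k^*-1\} \subseteq A \subseteq \{1, \ldots, k^*\}$, which yields
\begin{equation*}
\frac{C_{k^*-1}}{S} \;\le\; \mathbb{P}\bigl(V_{\mathrm{new}} \le \hat{\eta} \,\big|\, E\bigr) \;\le\; \frac{C_{k^*}}{S}.
\end{equation*}

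Finally, the defining sandwich $\alpha \le C_{k^*}/S < \alpha + w_{(k^*)}/S$ combined with the previous display gives $\bigl|\mathbb{P}(V_{\mathrm{new}} \le \hat{\eta} \mid E) - \alpha\bigr| \le w_{(k^*)}/S$, and Assumption~\ref{assump:u_l_bound} bounds $w_{(k^*)}/S \le \bar{w}/((N+1)\underline{w})$ deterministically. Taking expectation over $E$ closes the proof. The main obstacle is the three-way case split above: one must verify that the two leave-one-out corrections (removing $w_{(K)}$ from the numerator when $j \ge K$, and shrinking the target from $\alpha S$ to $\alpha(S - w_{(K)})$) cancel out everywhere except at the single rank $K = k^*$; once that is pinned down, the rest is just combining the sandwich bounds with the boundedness assumption.
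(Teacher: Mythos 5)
Your proposal is correct and takes essentially the same route as the paper's proof: condition on the unordered multiset of the $N+1$ points, use weighted exchangeability (with $\hat w = w$) to assign the test point probability $w(a_i)/\sum_j w(a_j)$ of occupying each rank, characterize coverage by the test point's rank relative to the weighted $\alpha$-quantile, and bound the conditional coverage probability within $w_{(k^*)}/S \leq \bar{w}/\big((N+1)\underline{w}\big)$ of $\alpha$ before integrating over the multiset. Your case split on $K$ versus $k^*$ is just a repackaging of the paper's indicator sandwich; both arguments reduce to the same two-sided bound $C_{k^*-1}/S \leq \mathbb{P}(\cdot \mid E) \leq C_{k^*}/S$.
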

\begin{proof}
    Since the distributions $\mathcal{P}$ and $\mathcal{Q}$ are continuous, almost surely the samples $x_1, \ldots, x_N, x_{\mathrm{new}}$ are mutually distinct. Let $\{\cdot\}$ denote an unordered set (e.g. $\{x_1, \ldots, x_N\}$ denotes an unordered set containing distinct elements $x_1, \ldots, x_N$). Then the following equation holds:
    \begin{equation}\label{eq:into_integration}
    \begin{aligned}
        &\mathbb{P}(\Tilde{\eta}(x_{\mathrm{new}})\leq \hat{\eta}(x_1,\ldots, x_N))\\
        &= \int_{\{a_1,\ldots, a_{N+1}\}}\mathbb{P}\bigg(\Tilde{\eta}(x_{\mathrm{new}})\leq \hat{\eta}(x_1,\ldots, x_N) \ \bigg| \ \{x_1,\ldots, x_n, x_{\mathrm{new}}\} = \{a_1,\ldots, a_{N+1}\}\bigg) \\
        & \phantom{=\int_{\{a_1,\ldots, a_{N+1}\}}}\cdot \mathbb{P}\bigg(\{x_1,\ldots, x_n, x_{\mathrm{new}}\} = \{a_1,\ldots, a_{N+1}\}\bigg).
    \end{aligned}
    \end{equation}
    The integration is over all possible sets of $N+1$ distinct elements, denoted by $\{a_1,\ldots, a_{N+1}\}$. The remaining part of the proof uniformly bounds the $\mathbb{P}\bigg(\Tilde{\eta}(x_{\mathrm{new}})\leq \hat{\eta}(x_1,\ldots, x_N) \ \bigg| \ \{x_1,\ldots, x_n, x_{\mathrm{new}}\} = \{a_1,\ldots, a_{N+1}\}\bigg)$ term for all sets $\{a_1,\ldots, a_{N+1}\}$.

    Given a set $\{a_1,\ldots, a_{N+1}\}$, let $E$ denote the event that $\{x_1,\ldots, x_n, x_{\mathrm{new}}\} = \{a_1,\ldots, a_{N+1}\}$, and let $E_i$ denote the event that $x_{\mathrm{new}} = a_i$ and $\{x_1, \ldots, x_N\} = \{a_1,\ldots, a_{N+1}\} \backslash \{a_i\}$. The $\mathbb{P}\bigg(\Tilde{\eta}(x_{\mathrm{new}})\leq \hat{\eta}(x_1,\ldots, x_N) \ \bigg| \ E\bigg)$ term can be decomposed by the following chain of equations:
    \begin{equation}\label{eq:starting_derivations}
        \begin{aligned}
        & \mathbb{P}\bigg(\Tilde{\eta}(x_{\mathrm{new}})\leq \hat{\eta}(x_1,\ldots, x_N) \ \bigg| \ E\bigg)\\
        & = \sum_{i=1}^{N+1} \mathbb{P}(E_i\mid E)\cdot \mathbb{P}\bigg(\Tilde{\eta}(x_{\mathrm{new}})\leq \hat{\eta}(x_1,\ldots, x_N) \ \bigg| \ E_i\bigg) \\
        & \stackrel{(1)}{=} \sum_{i=1}^{N+1} \mathbb{P}(E_i\mid E)\cdot \mathbbm{1}\bigg\{\Tilde{\eta}(x_{\mathrm{new}})\leq \hat{\eta}(x_1,\ldots, x_N) \ \bigg| \ E_i\bigg\}\\
        & \stackrel{(2)}{=} \sum_{i=1}^{N+1}\dfrac{w(a_i)}{\sum_{j=1}^{N+1}w(a_j)}\cdot \mathbbm{1}\bigg\{\Tilde{\eta}(x_{\mathrm{new}})\leq \hat{\eta}(x_1,\ldots, x_N) \ \bigg| \ E_i\bigg\},
    \end{aligned}
    \end{equation}
    where $(1)$ is from the fact that $\hat{\eta}(x_1, \ldots, x_N)$ is invariant to the permutation of $(x_1, \ldots, x_N)$, and $(2)$ is by that 
    $$
    \begin{aligned}
        \mathbb{P}(E_i) &= N!\cdot q(a_i)\cdot \prod_{j\neq i}p(a_j) \\
        &= N!\cdot w(a_i)\cdot \prod_{j=1}^{N+1}p(a_j),
    \end{aligned}
    $$
    and $\mathbb{P}(E) = \sum_{i=1}^{N+1}\mathbb{P}(E_i)$.

    We now characterize the terms  $\mathbbm{1}\bigg\{\Tilde{\eta}(x_{\mathrm{new}})\leq \hat{\eta}(x_1,\ldots, x_N) \ \bigg| \ E_i\bigg\}$. Without loss of generality, let $\Tilde{\eta}(a_1)<\Tilde{\eta}(a_2)\cdots <\Tilde{\eta}(a_{N+1})$, then 
    \begin{equation}\label{eq:indicator_equation}
        \begin{aligned}
        & \mathbbm{1}\bigg\{\Tilde{\eta}(x_{\mathrm{new}})\leq \hat{\eta}(x_1,\ldots, x_N) \ \bigg| \ E_i\bigg\} \\
        & = \mathbbm{1}\bigg\{\Tilde{\eta}(a_i)\leq \hat{\eta}(\{a_1,\ldots, a_{N+1}\} \backslash \{a_i\}) \bigg\}\\
        & = \mathbbm{1}\bigg\{\sum_{j=1}^{i-1}w(a_j) < \alpha\cdot \sum_{j\neq i} w(a_j)  \bigg\}.
    \end{aligned}
    \end{equation}
    The last line of \eqref{eq:indicator_equation} has the following bounds:
    \begin{equation}\label{eq:indicator_bound}
        \mathbbm{1}\bigg\{\sum_{j=1}^{i}w(a_j) < \alpha\cdot \sum_{j=1}^{N+1} w(a_j)  \bigg\} \leq \mathbbm{1}\bigg\{\sum_{j=1}^{i-1}w(a_j) < \alpha\cdot \sum_{j\neq i} w(a_j)  \bigg\} \leq \mathbbm{1}\bigg\{\sum_{j=1}^{i-1}w(a_j) < \alpha\cdot \sum_{j=1}^{N+1} w(a_j)  \bigg\}.
    \end{equation}
    Combining \eqref{eq:starting_derivations}, \eqref{eq:indicator_equation} and \eqref{eq:indicator_bound}, the $\mathbb{P}\bigg(\Tilde{\eta}(x_{\mathrm{new}})\leq \hat{\eta}(x_1,\ldots, x_N) \ \bigg| \ \{x_1,\ldots, x_n, x_{\mathrm{new}}\} = \{a_1,\ldots, a_{N+1}\}\bigg)$ term is uniformly bounded:
    $$
    \begin{aligned}
        & \mathbb{P}\bigg(\Tilde{\eta}(x_{\mathrm{new}})\leq \hat{\eta}(x_1,\ldots, x_N) \ \bigg| \ E\bigg)\\
        & \geq \sum_{i=1}^{N+1}\dfrac{w(a_i)}{\sum_{j=1}^{N+1}w(a_j)}\cdot \mathbbm{1}\bigg\{\sum_{j=1}^{i}w(a_j) < \alpha\cdot \sum_{j=1}^{N+1} w(a_j)\bigg\}\\
        & \geq \alpha - \dfrac{\bar{w}}{\sum_{j=1}^{N+1} w(a_j)} \geq \alpha - \dfrac{1}{N+1}\cdot \dfrac{\bar{w}}{\underline{w}}.
    \end{aligned}
    $$
    $$
    \begin{aligned}
        & \mathbb{P}\bigg(\Tilde{\eta}(x_{\mathrm{new}})\leq \hat{\eta}(x_1,\ldots, x_N) \ \bigg| \ E\bigg)\\
        & \leq \sum_{i=1}^{N+1}\dfrac{w(a_i)}{\sum_{j=1}^{N+1}w(a_j)}\cdot \mathbbm{1}\bigg\{\sum_{j=1}^{i-1}w(a_j) < \alpha\cdot \sum_{j=1}^{N+1} w(a_j)\bigg\}\\
        & \leq \alpha + \dfrac{\bar{w}}{\sum_{j=1}^{N+1} w(a_j)} \leq \alpha + \dfrac{1}{N+1}\cdot \dfrac{\bar{w}}{\underline{w}}.
    \end{aligned}
    $$
    Plugging the bounds above into \eqref{eq:into_integration} finishes the proof.

\end{proof}

\subsection{Proof of Corollary \ref{coro:guarantee}}
By defining the estimated test distribution $\hat{\mathcal{Q}}$ as a distribution with the following density function:
$$
    \hat{q}(c,z) = \dfrac{\hat{w}(c,z)\cdot p(c,z)}{\int_{(c,z)}\hat{w}(c,z)\cdot p(c,z) \mathrm{d}z\mathrm{d}c},
$$
we could directly apply Theorem \ref{thm:cvg_alpha} to see that 
\begin{equation}\label{eq:thm_1_apx}
    \Big| \mathbb{P}_{\hat{\mathcal{Q}}}\left(c_{\mathrm{new}} \in \mathcal{U}_\alpha(z_{\mathrm{new}})\right) -\alpha\Big|\leq \dfrac{1}{|\mathcal{D}_2|+1}\cdot \dfrac{\bar{w}}{\underline{w}},
\end{equation}
where the probability $\mathbb{P}_{\hat{\mathcal{Q}}}$ is with respect to $(c_{\text{new}},z_{\text{new}})\sim \hat{\mathcal{Q}}$ and $\mathcal{D}_2 \sim \mathcal{P}$. From the definition of the total variation distance, the following inequality holds:
\begin{equation}\label{eq:dtv_apx}
    \Big| \mathbb{P}_{\hat{\mathcal{Q}}}\left(c_{\mathrm{new}} \in \mathcal{U}_\alpha(z_{\mathrm{new}})\right) -\mathbb{P}\left(c_{\mathrm{new}} \in \mathcal{U}_\alpha(z_{\mathrm{new}})\right)\Big|\leq D_{\text{TV}}(\mathcal{Q}, \hat{\mathcal{Q}}),
\end{equation}
where $\mathbb{P}$ is with respect to $(c_{\text{new}},z_{\text{new}})\sim \mathcal{Q}$ and $\mathcal{D}_2 \sim \mathcal{P}$. Combining \eqref{eq:thm_1_apx} and \eqref{eq:dtv_apx} gives the result of Corollary \ref{coro:guarantee}, which we restate below:
\begin{equation*}
    \Big|\mathbb{P}\left(c_{\mathrm{new}} \in \mathcal{U}_\alpha(z_{\mathrm{new}})\right) -\alpha\Big| \leq \dfrac{1}{|\mathcal{D}_2|+1}\cdot \dfrac{\bar{w}}{\underline{w}} + D_{\mathrm{TV}}(\mathcal{Q}, \hat{\mathcal{Q}}).
\end{equation*}

\end{document}